\documentclass[microtype]{gtpart}     
\usepackage{graphicx}  

\title{$\ \ $ A realizability criterion for rational maps with three branch points}

\author{Zhiqiang Wei}
\address{School of Mathematics and Statistics, Henan University, Kaifeng, China \newline
Center for Applied Mathematics of Henan Province, Henan University, Zhengzhou, China}
\email{weizhiqiang15@mails.ucas.edu.cn  ~~or~~10100123@vip.henu.edu.cn}

\keyword{Branched covering, Hurwitz problem, Rational map}
\subject{primary}{msc2020}{57M12}

\arxivreference{}

\newtheorem{thm}{Theorem}[section]    
\theoremstyle{definition}
\newtheorem{defn}[thm]{Definition}    
\newtheorem*{rem}{Remark}             
\newtheorem{cor}{Corollary}[section]

\begin{document}

\begin{abstract}
Let
\[
\pi_{0}=[\underbrace{2,\ldots,2}_{2s-1}],\quad
\pi_{\infty}=[\underbrace{3,\ldots,3}_{s},\underbrace{1,\ldots,1}_{s-2}],\quad
\pi_{1}=[\gamma_{1},\gamma_{2},\gamma_{3}]
\]
be three nontrivial partitions of the integer \(d=4s-2\), where \(s\ge 3\). We establish a necessary and sufficient condition for the candidate datum \(\{\pi_{0},\pi_{\infty},\pi_{1}\}\) to be realizable by a rational map.
\end{abstract}

\maketitle


\section{Introduction}

Let $M$ and $N$ be closed surfaces with Euler characteristics $\chi(M)$ and $\chi(N)$, respectively. A smooth map $f: M \to N$ is a degree-$d$ \emph{branched covering} if for each $x \in N$ there exists a partition $\pi(x) = [\alpha_1, \ldots, \alpha_r]$ of $d$ (square brackets denote an unordered multiset) such that, in a neighbourhood of $x$ in $N$, $f$ is locally modelled by the map
\[
\widetilde{f}: \{1,\ldots,r\} \times \mathbb{D} \to \mathbb{D}, \qquad \widetilde{f}(j,z) = z^{\alpha_j},
\]
with $x$ corresponding to $0 \in \mathbb{D} = \{z : |z| < 1\} \subset \mathbb{C}$. We denote the length of a partition $\pi(x)$ by $|\pi(x)|$. The points $x \in N$ for which $\pi(x)$ is nontrivial form the finite \emph{branch set} $B_f$ of $f$. The collection $\mathcal{D} = \{\pi(x) : x \in B_f\}$ (with repetitions allowed) is called the \emph{branch datum} of $f$.

It is well known that the degree $d$, the Euler characteristics $\chi(M)$ and $\chi(N)$, and the branch datum $\mathcal D$ must satisfy the Riemann--Hurwitz formula
\begin{equation}\label{RHF}
\nu(\mathcal{D}) = d \cdot \chi(N) - \chi(M),
\end{equation}
where $\nu(\mathcal{D})$ denotes the total branching of $f$. More precisely, if
\[
\mathcal{D} = \{[\alpha_{11},\ldots,\alpha_{1r_1}], \ldots, [\alpha_{n1},\ldots,\alpha_{nr_n}]\},
\]
then
\[
\nu(\mathcal{D}) = \sum_{k=1}^{n} \sum_{j=1}^{r_k} (\alpha_{kj} - 1).
\]

Given closed surfaces $M$ and $N$, a pair $(d, \mathcal D)$ consisting of an integer $d \geq 2$ and a collection $\mathcal D$ of nontrivial partitions of $d$ is called a \emph{candidate branch datum} if it satisfies the Riemann--Hurwitz formula. When $d$ is understood, we sometimes refer to $\mathcal D$ itself as a candidate branch datum.

Given closed surfaces $M$ and $N$ and a collection $\mathcal D$ of partitions of $d$, the problem of determining whether there exists a degree-$d$ branched covering $f: M \to N$ with branch datum $\mathcal D$ is known as the \emph{Hurwitz existence problem}. This is a fundamental question in complex analysis, algebraic topology and algebraic geometry, and it has attracted considerable research interest over the years.

In his classical work, Hurwitz \cite{Hur91} reduced this problem to the realizability of certain cycle types by permutations in symmetric groups. Edmonds, Kulkarni, and Stong \cite{EKS84} proved that every candidate branch datum is realizable when $\chi(N) \leq 0$ (see also \cite{EZ78,Hus62}). However, the case $N = S^2$ is considerably more subtle: there exist candidate branch data that cannot be realized by any branched covering; such data are called \emph{exceptional}. A classical example is $d = 4$ and $\mathcal{D} = \{[3,1], [2,2], [2,2]\}$. Characterising all exceptional candidate branch data remains an open problem (see \cite{Zhe06,Zhu19,WWX25}).

Various approaches have been developed to attack this problem, including dessins d'enfants, Speiser graphs, and monodromy methods; see \cite{Bar01, BP23, EKS84, Ger87, KZ96, Med84, Med90, MSS04, OP06, Pak09, P20, PP09, PP12, PP06, PP07, PP08, SX20, Th65} for further details. We refer the reader to \cite{P20, PP08} for comprehensive surveys of known results and techniques.

More recently, motivated by the study of spherical metrics with conical singularities, Zhu \cite{Zhu19} provided new exceptional branch data. Subsequently, we developed a geometric approach to the Hurwitz existence problem in \cite{Wei26}. To state our results clearly, we recall the notions of a \emph{conical singularity} and a \emph{football metric}---or simply a \emph{football}.

\begin{defn}[\cite{WZ}]\label{DF1}
Let ${\rm d}s^{2}=e^{2\psi}|{\rm d}z|^{2}$ be a conformal metric on the punctured disk $\mathbb{D}\setminus\{0\}$. The singular point $z=0$ is called a conical point with singular angle $2\pi\alpha$ ($0<\alpha\neq 1$) if and only if $\psi$ can be locally expressed as
\[
\psi(z)=(\alpha-1)\ln |z|+\rho(z)
\]
with $\rho(z)$ a smooth function on $\mathbb{D}$.
\end{defn}

Note that in Definition \ref{DF1}, when $\alpha=1$ the metric is smooth at the singularity; however, in this paper we shall still refer to such a point as a conical point of the metric.

\begin{defn}[\cite{WWX25}]
We call a spherical conical metric on $S^{2}\cong\mathbb{C}\cup\{\infty\}$ a football metric (or simply a football) if it has two equal cone angles at antipodal points. This metric can be expressed explicitly as
\[
{\rm d}s^{2}=\frac{4\alpha^{2}|z|^{2(\alpha-1)}}{(1+|z|^{2\alpha})^{2}}|{\rm d}z|^{2},
\]
and is denoted by $S^{2}_{\{\alpha,\alpha\}}$ in this paper, where $\alpha>0$ is the cone angle parameter.
\end{defn}

\begin{figure}[htbp]
\centering
\includegraphics[width=8cm]{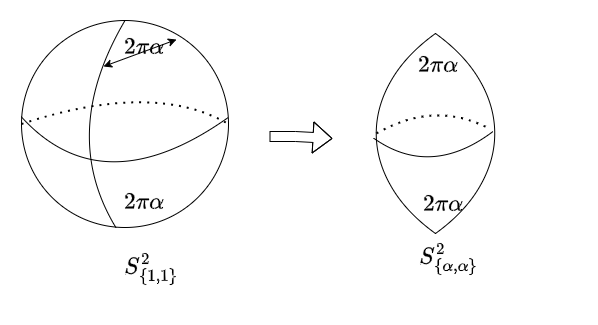}
\caption{Constructing a football $S^{2}_{\{\alpha,\alpha\}}$ from the standard football.}
\label{fig:Football}
\end{figure}

For example, starting from the standard football $S^2 = S^2_{\{1,1\}}$, one can construct a football $S^2_{\{\alpha,\alpha\}}$ by taking a bigon with angle $2\pi\alpha$ ($0<\alpha<1$) and gluing along its meridians (see Figure~\ref{fig:Football}).

\begin{thm}[\textbf{Football decomposition for rational maps}]\cite{Wei26}\label{Thm1}
Let $f: \overline{\mathbb{C}} \to \overline{\mathbb{C}}$ be a rational map of degree $\geq 2$, and let ${\rm d}s_0^2$ be the constant curvature $1$ metric on the target sphere. Then the pullback metric $f^*{\rm d}s_0^2$ is a constant curvature $1$ metric with finitely many conical singularities on the source sphere. Moreover, the space $(\overline{\mathbb{C}}, f^*{\rm d}s_0^2)$ admits a canonical decomposition: by cutting along a finite set of geodesics connecting the poles, zeros, and critical points of $f$, it can be partitioned into finitely many pieces, each isometric to a football.
\end{thm}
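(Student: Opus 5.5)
The pullback metric is constant curvature $1$ away from the critical points of $f$ and has explicit conical behaviour at them. The decomposition comes from cutting along the lifts of a fixed great semicircle through $0$, $1$, $\infty$ on the target.

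\textbf{Local structure of the pullback.} First I will check the local form of $f^*{\rm d}s_0^2$. On the target, ${\rm d}s_0^2 = 4|{\rm d}w|^2/(1+|w|^2)^2$, which has curvature $1$. Since $f$ is holomorphic, $f^*{\rm d}s_0^2 = 4|f'(z)|^2|{\rm d}z|^2/(1+|f(z)|^2)^2$. Wherever $f'\neq 0$ this is a local isometry, so the curvature is $1$ there.

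At a point $p$ with local degree $\alpha$, choose coordinates with $f(z)=z^{\alpha}$ after composing with a rotation of the target sphere; such a rotation is an isometry, so it does not change the pullback. In these coordinates $\psi = \ln(2\alpha) + (\alpha-1)\ln|z| - \ln(1+|z|^{2\alpha})$. This is a conical point of angle $2\pi\alpha$ in the sense of Definition~\ref{DF1}, and the remainder $\rho$ is smooth. Points at $\infty$ are handled with the chart $1/z$ or $1/w$. Since there are only finitely many critical points, the metric has finitely many cone points.

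\textbf{The cutting graph.} Next I will build the decomposition. Assume, after post-composing with a Möbius isometry if necessary, that $0$, $\infty$ and the critical values are arranged as follows. Let $\Gamma$ be the closed great circle through $0$, $\infty$ and $1$, namely $\overline{\mathbb{R}}\subset\overline{\mathbb{C}}$.

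Without loss of generality all critical values lie on $\Gamma$. In the three-branch-point setting they are $0$, $1$, $\infty$, and in general one adjusts or refines by adding meridians. The complement of $\Gamma$ consists of two hemispheres. Each is a bigon with vertices $0$ and $\infty$ and angle $\pi$.

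Cut the target instead along a single meridian: the half-line $[0,\infty]$ through $1$ if all critical values lie there. The slit sphere is then a football $S^2_{\{1,1\}}$ cut open into a lune of angle $2\pi$. Because $f$ is unbranched over its complement, $f^{-1}$ of the slit sphere is a disjoint union of $d$ copies of this lune. Each copy is mapped isometrically onto the lune.

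\textbf{Regluing into footballs.} The preimage of the slit is a finite graph of geodesics. Its vertices are zeros, poles and critical points of $f$, and the preimages of $1$. I will reglue the $d$ lunes in cyclic chains around each zero.

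Around a zero $p$ of order $\alpha$, exactly $\alpha$ lunes meet with their tips at $p$. They are glued consecutively along preimage edges that join $p$ to some pole. Grouping lunes that share both a zero tip and a pole tip, and are glued along full meridians from that zero to that pole, gives a bigon of angle $2\pi k$. Closing it up produces a football $S^2_{\{k,k\}}$.

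The cuts that remain are the edges separating different zero–pole pairs, and these are geodesics joining zeros, poles and critical points. This yields the canonical decomposition, canonical because it is determined by $f^{-1}$ of the fixed meridian.

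\textbf{Main obstacle.} The delicate step is when critical values do not lie on a single meridian, or when critical points sit in the interior of the meridian, for example over $1$. In that case the lunes are glued along only partial segments of the meridian, and one must show the pieces are still footballs, with critical points appearing only on the cutting geodesics.

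I would handle this by cutting along the entire preimage of the meridian through all critical values. If necessary I would use several meridians through $0$ and $\infty$, each passing through one critical value. Each piece is then a lune between consecutive meridians, and pieces sharing the same zero–pole pair are grouped as above. Checking that these groupings close up exactly into footballs, rather than into more general spherical polygons, is the main point to verify.
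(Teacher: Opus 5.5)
Your computation of the cone angles is correct. When every critical value other than $0,\infty$ lies on the closed meridian $[0,+\infty]$, your lune-regluing argument is the one the paper sketches in Section~\ref{sec2} for branch points $0,1,\infty$; the general statement is only cited from \cite{Wei26}.

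The theorem, however, is stated for an arbitrary rational map, and there ``without loss of generality all critical values lie on $\Gamma$'' is false. The statement refers to the zeros and poles of $f$, so you may only post-compose with isometries preserving $\{0,\infty\}$, namely $w\mapsto e^{i\theta}w$ or $w\mapsto e^{i\theta}/w$. These move at most one critical value onto the positive real axis. For the general case you name the right cut (lifts of several meridians), but you explicitly leave open whether the resulting groups close up into footballs rather than general polygons. That is the whole content of the theorem, so as written you have proved only the special case.

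A smaller point: grouping must be by adjacency across lifted arcs that contain no critical point, not by sharing a zero--pole pair. Two pieces with the same $p,q$ can be separated by an arc through a critical point. So describing the remaining cuts as ``separating different zero--pole pairs'' is inaccurate.

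The missing idea is to work with whole leaves of the lifted meridian foliation $\{\arg f=\mathrm{const}\}$ (the trajectories of $f/f'\,\partial_z$) instead of with lunes.

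Let $\mathcal C$ be the finite union of lifted meridian segments that end at a critical point $c$ with $f(c)\neq 0,\infty$. A leaf through a point off $\mathcal C$ meets no such point. So it maps isometrically onto a full meridian and runs from a zero $p$ to a pole $q$.

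Hence on each component $U$ of $\overline{\mathbb{C}}\setminus(\mathcal C\cup f^{-1}\{0,\infty\})$, the endpoints $p$ and $q$ are constant. The leaves in $U$ are indexed by one gap $I$ between consecutive segments of $\mathcal C$ in the circle of directions at $p$; this circle has $\arg f$-length $2\pi\alpha$ for a zero of order $\alpha$.

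Then $U\cong(0,\pi)\times I$ carries a continuous branch of $\arg f$. The map $z\mapsto(|f(z)|,\arg f(z))$ is an isometry of $U$ onto the sector $\{(r,\phi):r>0,\ \phi\in I\}$ with metric $4(dr^2+r^2\,d\phi^2)/(1+r^2)^2$. This is the standard bigon of angle $\Theta=|I|$, where $\Theta$ may exceed $2\pi$.

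Its completion adds exactly two sides, the limiting leaves from $p$ to $q$, each of length $\pi$. The critical points lie on these sides, with interior angle $\pi$ as seen from $U$. Gluing the two sides by $|f|$ gives $S^2_{\{\Theta/2\pi,\Theta/2\pi\}}$, so no other polygons can occur.

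If no segment of $\mathcal C$ reaches $p$, the whole sphere is the single football $S^2_{\{d,d\}}$, which covers your cyclic-chain case. In general $\Theta$ is not a multiple of $2\pi$. The integrality that the paper uses comes from all critical values lying on one meridian.
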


Based on this result, we introduce the following definition.

\begin{defn}
Let $f: \overline{\mathbb{C}} \to \overline{\mathbb{C}}$ be a rational map of degree $d\geq2$, and let ${\rm d}s_0^2$ be the constant curvature $1$ metric on the target sphere. The football decomposition of $(\overline{\mathbb{C}}, f^*{\rm d}s_0^2)$ is called the football decomposition of the rational map $f$.
\end{defn}

This paper employs the football decomposition of rational maps to investigate the realizability of a specific family of rational maps with three branch points. Recall that, for a given branch datum, the realizability of a branched covering $f \colon S^{2} \to S^{2}$ is equivalent to that of a rational map.

Let
\[
\pi_{0}=[\underbrace{2,\ldots,2}_{2s-1}],\quad
\pi_{\infty}=[\underbrace{3,\ldots,3}_{s},\underbrace{1,\ldots,1}_{s-2}],\quad
\pi_{1}=[\gamma_1,\gamma_2,\gamma_3]
\]
be three nontrivial partitions of the integer \(d=4s-2\), where \(s\geq3\). Denote $\{\pi_{0},\pi_{\infty},\pi_{1}\}$ by $\mathcal D$.
To the best of our knowledge, the realizability of $\mathcal D$ is known only in the following case, due to Boccara \cite{Bo82} and Song--Xu \cite{SX20}.

\begin{thm}\cite{Bo82,SX20}
If $\pi_{1}=[1,1,4s-4]$, then $\mathcal D$ is realizable.
\end{thm}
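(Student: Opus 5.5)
The plan is to realize $\mathcal D$ combinatorially, by a planar map (a dessin), and then invoke Riemann's existence theorem. A degree-$d$ branched covering $S^2\to S^2$ branched over $\{0,1,\infty\}$ with branch datum $\mathcal D$ exists if and only if there are permutations $\sigma_0,\sigma_\infty\in S_d$ with the following properties:
\begin{itemize}
\item $\sigma_0$ has cycle type $\pi_0$ and $\sigma_\infty$ has cycle type $\pi_\infty$;
\item $(\sigma_0\sigma_\infty)^{-1}$ has cycle type $\pi_1$;
\item $\langle\sigma_0,\sigma_\infty\rangle$ acts transitively.
\end{itemize}
The Riemann--Hurwitz formula then forces the source to be a sphere. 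As a check, $\nu(\mathcal D)=(2s-1)+2s+(4s-5)=8s-6=2d-2$. Such a covering is equivalent to a rational map.

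Because $\pi_0$ consists only of $2$'s, $\sigma_0$ is a fixed-point-free involution. I therefore read the data as a connected map on the sphere:
\begin{itemize}
\item the half-edges are the $d=4s-2$ letters, and $\sigma_0$ pairs them into $2s-1$ edges;
\item $\sigma_\infty$ gives the cyclic order of half-edges at each vertex, so the map has $s$ trivalent vertices and $s-2$ univalent vertices;
\item $\sigma_0\sigma_\infty$ traverses the face boundaries, so we need exactly two faces of degree $1$ and one face of degree $4s-4$.
\end{itemize}
This gives $V-E+F=(2s-2)-(2s-1)+3=2$, consistent with the sphere. A face of degree $1$ is a monogon, that is, a loop at a vertex enclosing nothing.

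The explicit construction is as follows. Take a path of $s$ trivalent vertices $v_1,\dots,v_s$ joined by $s-1$ edges $v_iv_{i+1}$. Attach a loop at each endpoint $v_1$ and $v_s$, drawn so that it bounds an empty disk; this uses up the remaining two half-edges there. At each interior vertex $v_2,\dots,v_{s-1}$, attach one pendant edge to a new univalent vertex, giving $s-2$ leaves. The degrees are then three at each $v_i$ and one at each leaf, and the edge count is $2+(s-1)+(s-2)=2s-1$, as required. The map is connected and planar.

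It remains to check the faces. The two loops bound monogons, accounting for two faces of degree $1$. By Euler's formula there is exactly one more face, and its degree is $2E-2=4s-4$. Concretely, one can trace the outer face boundary around the tree-with-loops and see that it visits every half-edge except the two inner sides of the loops. Transitivity is automatic from connectedness.

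Labeling the half-edges gives the permutations $\sigma_0$ (edges) and $\sigma_\infty$ (counterclockwise rotation at vertices). Their product has cycle type $[1,1,4s-4]$, so Riemann's existence theorem yields a rational map realizing $\mathcal D$. In the paper's language, pulling back the round metric gives a football decomposition, with the two monogons corresponding to the degree-$1$ points over $1$.

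The only step needing care is verifying that the outer face is a single boundary cycle and that no other faces arise. The Euler characteristic count settles this once planarity and connectedness are clear, so I do not expect a genuine obstacle.
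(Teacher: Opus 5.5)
Your proof is correct. The paper gives no argument for this statement and simply quotes it from Boccara and Song--Xu, so your explicit construction is a genuinely different, self-contained route.

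Your map consists of the path $v_1,\dots,v_s$, an empty loop at each end, and a pendant edge at each interior vertex. Placing a black vertex of valence $2$ at the midpoint of each of the $2s-1$ edges turns it into a dessin in exactly the sense of the criterion recalled in Section~2. The two empty loops give the two complementary regions of length $1$. Euler's formula gives $F=2-(2s-2)+(2s-1)=3$, and the face lengths sum to $2E=4s-2$, so the remaining region has length $4s-4$; no boundary tracing is actually needed.

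The citation places this family inside the broader literature on realizability. Your construction instead gives a uniform picture for every $s\ge 3$ that can be checked by hand.

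Your construction also matches the paper's football viewpoint. The cone angle of any piece at a point over $0$ is at most $4\pi$, since $\pi_0$ consists of $2$'s. With $n=4s-4$ pieces and $\sum x_i=4s-2$, the decomposition must therefore consist of two copies of $S^2_{\{2,2\}}$ and $4s-6$ standard footballs. A zero lying in an $S^2_{\{2,2\}}$ has both of its dessin edges going to the same pole. In your map this happens only at the midpoints of the two loops, so those midpoints carry the two $S^2_{\{2,2\}}$ pieces.
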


Turning to the remaining cases, we state our first result below.

\begin{thm}\label{MThm1}
If $\gamma_{1}=1$ and $\gamma_{3}>\gamma_{2}\geq2$, then the following statements hold.
\begin{enumerate}
\item When $s=3$, $\mathcal D$ is realizable if and only if $\pi_{1}=[1,2,7]$ or $[1,4,5]$.
\item When $s\geq4$, $\mathcal D$ is realizable.
\end{enumerate}
\end{thm}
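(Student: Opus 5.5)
The plan is to reduce realizability of $\mathcal D$ to the existence of a certain planar graph, and then to construct such graphs (or rule them out) by hand. This graph is essentially the $1$-skeleton of the football decomposition of Theorem~\ref{Thm1}, restricted to the geodesic from $0$ to $\infty$. Suppose $f$ realizes $\mathcal D$ with branch values $0,\infty,1$, and put $\Gamma_0=f^{-1}([0,\infty])$, where $[0,\infty]$ is the geodesic meridian avoiding $1$.

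Every preimage of $0$ has local degree $2$, so it is an interior point of exactly two arcs. Erasing these points gives a planar graph $G$ with the following data:
\begin{itemize}
\item its vertices are the preimages of $\infty$, namely $s$ of degree $3$ and $s-2$ of degree $1$;
\item it has $2s-1$ edges, one for each preimage of $0$;
\item each face of $S^2\setminus G$ contains exactly one preimage of $1$, so there are three faces, with boundary-walk lengths $\gamma_1,\gamma_2,\gamma_3$ (a bridge is counted twice).
\end{itemize}
Conversely, from such a planar graph one recovers a branched cover by the usual gluing. Each face of boundary length $\gamma$ is a $2\gamma$-gon once the midpoints of edges are reinstated as preimages of $0$. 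Cone it to an interior point and map it to the complementary disc of $[0,\infty]$ by the degree-$\gamma$ model; this is exactly gluing $\gamma$ half-footballs. So $\mathcal D$ is realizable if and only if a plane graph with degree sequence $3^s1^{s-2}$, $2s-1$ edges, and face lengths $1,\gamma_2,\gamma_3$ exists. Euler's formula checks out: $V-E+F=(2s-2)-(2s-1)+3=2$.

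Next I would determine the structure of such a graph $G$. It is connected with cycle rank $E-V+1=2$. A face of length $1$ is a loop at a trivalent vertex $a$ that bounds an empty disc. Hence the core of $G$, obtained by repeatedly pruning leaves, is a "dumbbell": a loop at $a$, a path $P$ (possibly of length $0$) from $a$ to a vertex of a cycle $C$, and the cycle $C$ itself. The rest of $G$ consists of trees attached at trivalent points. Since all degrees are $1$ or $3$, every core vertex of degree $2$ carries exactly one pendant binary tree, and pendant trees are binary, each contributing an even amount to the lengths of the faces it lies in. The loop can sit either outside or inside $C$. Writing $c=|C|$, $p=|P|$, and letting $t_{\rm in},t_{\rm out}$ be the numbers of tree edges inside and outside $C$, one face length is $c+2t_{\rm in}$ (plus $2p+1$ if the loop is inside $C$) and the other is $\gamma_2+\gamma_3=4s-3$ minus that. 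Together with the vertex count $s$ (trivalent) and $s-2$ (leaves), this gives a finite arithmetic system that is easy to enumerate.

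For $s\ge 4$, I would construct explicit families. Take $C$ of length $c$, put the loop outside, and attach single-edge leaves (and, if needed, one larger binary tree) either inside or outside $C$. Moving a leaf from outside to inside shifts $\gamma_2$ by $2$; changing $c$ by one (while trading a leaf on $P$ for one on $C$) shifts parity. This should let $\gamma_2$ run over every value $2\le\gamma_2\le 2s-2$. I will check the extreme values, $\gamma_2=2$ and $\gamma_2=2s-2$, with $s\ge4$ giving enough leaves.

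For $s=3$ we have $V=4$ (three trivalent vertices and one leaf), $E=5$, and $\gamma_2\in\{2,3,4\}$. Enumeration produces the configuration for $[1,2,7]$ (a $2$-cycle with the leaf outside) and for $[1,4,5]$ (a $2$-cycle with the leaf inside, or $c=3$), and it must show that $[1,3,6]$ is impossible. The main obstacle is this last nonexistence claim. A face of length $3$ requires $c=3$ with no pendant tree inside. But three trivalent vertices, one of which is $a$, cannot supply a $3$-cycle attached to $P$ without a fourth trivalent vertex. I would verify carefully that both placements of the loop (inside or outside $C$) and all attachments of the single leaf are exhausted.
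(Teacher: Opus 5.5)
Your argument is correct, but it goes through the dessin d'enfant rather than the football decomposition the paper uses; the paper only remarks that a dessin proof is possible. You erase the degree-$2$ black vertices to get a plane graph and prove a structure lemma: the core is a loop at $a$, a path $P$ and a cycle $C$, with binary pendant trees attached. Existence and nonexistence then both become arithmetic in $(c,p,t_{\mathrm{in}},t_{\mathrm{out}})$.

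The paper's constructions for $s\ge 4$ are the same objects in another language:
\begin{itemize}
\item its base map with datum $\{[2,\ldots,2],[2,\ldots,2],[\gamma_2,\gamma_2]\}$ is your $\gamma_2$-cycle;
\item gluing standard footballs through a point adds pendant material inside one face;
\item the $S^2_{\{2,2\}}$ is the loop at $a$;
\item its Case 1 and the three subcases of Case 2 amount to taking $C$ of length $\gamma_2$, $s-1$ or $s-2$.
\end{itemize}

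The real difference is the necessity argument at $s=3$. The paper asserts, from a figure, that every realization contains a specific degree-$8$ block $g$ that can be reglued in exactly two ways. Your count actually proves that $[1,3,6]$ is exceptional: a face of length $3$ forces a $3$-cycle of trivalent vertices not containing $a$, hence four trivalent vertices. So your route gives rigor and a uniform parametrization, while the paper's gives the geometric picture that feeds its Type-A/Type-B analysis.

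A few small slips to fix.
\begin{itemize}
\item $P$ cannot have length $0$, since $a$ would then have degree $4$.
\item $C$ must have length at least $2$, since a one-edge $C$ leaves an empty side, which is a second face of length $1$.
\item Consequently, for $s=3$ the leaf count gives $c+p=3$, forcing $c=2$ and $p=1$. Your ``or $c=3$'' for $[1,4,5]$ is impossible, by the same argument you use against $[1,3,6]$.
\item For $s\ge 4$ the value needing care is $\gamma_2=s$, not the extremes. With $p=1$ it already fails for $s=4$, namely for $[1,4,9]$, and you need $p=2$.
\end{itemize}

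A uniform choice for $s\ge 4$ is $c+p=s$ with all $s-2$ pendant trees single edges. The loop-free face then has length $c+2j$ for any $0\le j\le c-1$. Choose the parameters as follows:
\begin{itemize}
\item $c=\gamma_2$ and $j=0$ when $\gamma_2\le s-1$;
\item $c=s-2$, $p=2$ and $j=1$ when $\gamma_2=s$;
\item $c=s-1$ and $p=1$ when $\gamma_2\ge s+1$, letting the loop-free face be $\gamma_2$ or $\gamma_3$ according to parity.
\end{itemize}
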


To state our second result more clearly, we need the following definition.

If $\mathcal D$ is realizable, then there exists a rational map $f: \overline{\mathbb{C}}\to \overline{\mathbb{C}}$ with branch datum $\mathcal D$ and branch points $0,\infty,1$, where these branch points correspond to the partitions $\pi_0,\pi_\infty,\pi_1$, respectively. Let $\mathrm{d}s_0^2$ denote the standard metric on the target sphere $\overline{\mathbb{C}}$, and let $\gamma$ be its equator (see Figure~\ref{fig:Hor}). Then the preimage $f^{-1}(\gamma)$ is a geodesic curve that contains the preimages of the branch point $1$. We shall refer to $f^{-1}(\gamma)$ as a \emph{horizontal geodesic}. It is easy to see that $f^{-1}(\gamma)$ is not simple. This observation leads to the following definition.

\begin{figure}[htbp]
\centering
\includegraphics[width=3cm]{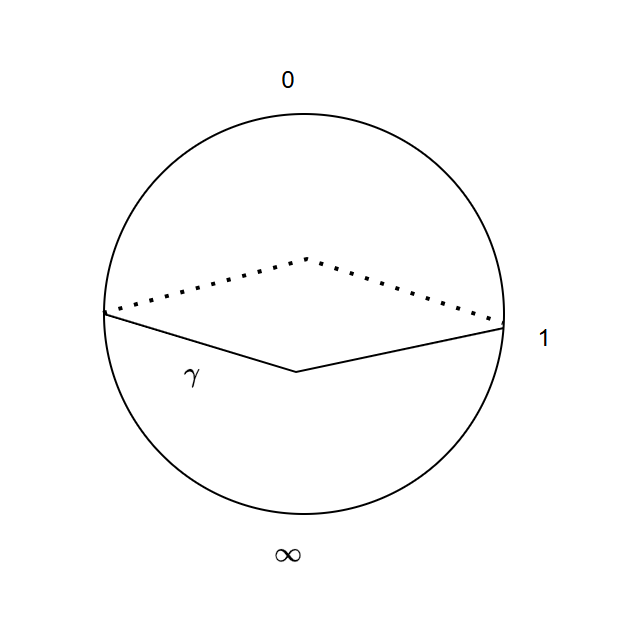}
\caption{Equator $\gamma$.}
\label{fig:Hor}
\end{figure}

\begin{defn}
We say that $f$ is of Type-A if there exists a closed simple curve $\Gamma$ of length $6\pi$ such that $\Gamma \subseteq f^{-1}(\gamma)$ and $f^{-1}(1) \subseteq \Gamma$; otherwise, $f$ is of Type-B.
\end{defn}

For example, Figure~\ref{fig:TypeA} illustrates a rational map $f$ whose branch datum is
\[
\{[2,2,2,2],[3,3,2],[3,3,2]\}.
\]
It is easy to see that $f$ is of Type-A.
\begin{figure}[htbp]
\centering
\begin{minipage}{0.35\textwidth}
    \centering
    \includegraphics[width=\linewidth]{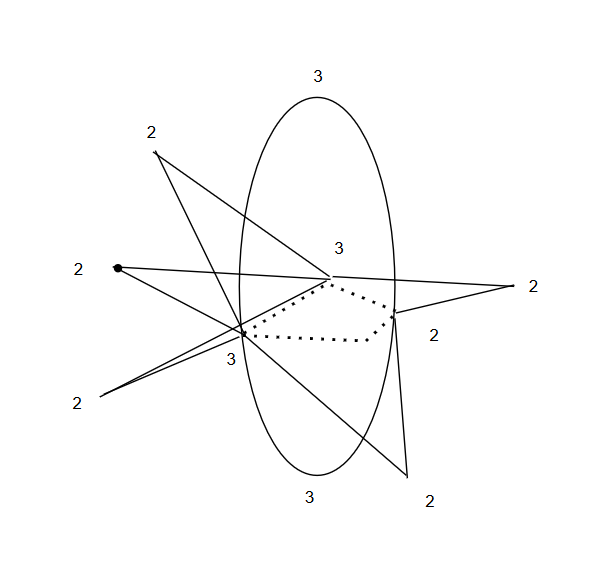}
    \caption{Figure of the map $f$.}
    \label{fig:TypeA}
\end{minipage}
\hfill
\begin{minipage}{0.35\textwidth}
    \centering
    \includegraphics[width=\linewidth]{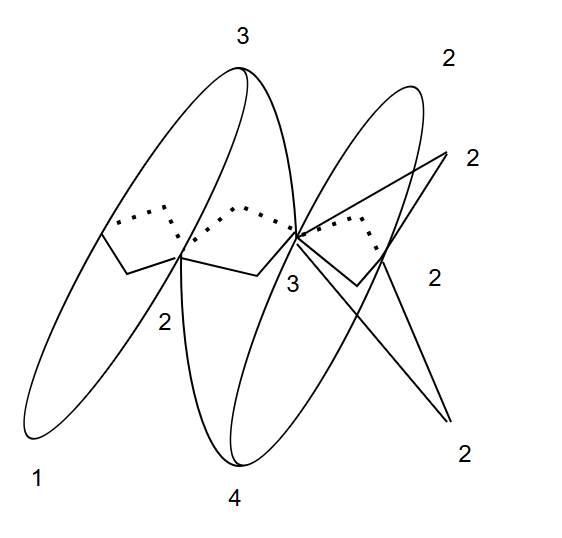}
    \caption{Figure of the map $g$.}
    \label{fig:TypeB}
\end{minipage}
\end{figure}

Figure \ref{fig:TypeB} illustrates a rational map $g$ whose branch datum is
\[
\{[3,2,2],[4,2,1],[3,2,2]\}.
\]
It is easy to see that $g$ is of Type-B.

\begin{thm}\label{MThm2}
If \(\gamma_i \ge 2\) for each \(i=1,2,3\), then \(\mathcal D\) is realizable by a rational map \(f:\overline{\mathbb{C}}\to \overline{\mathbb{C}}\) with branch points \(0,\infty,1\), corresponding to \(\pi_0,\pi_\infty,\pi_1\), respectively, if and only if one of the following statements holds.

\begin{enumerate}
\item[(A)] The map \(f\) is of Type-A and one of the following conditions holds.
\begin{enumerate}
\item[(A-1)] For every permutation \(\sigma\) of \(\{1,2,3\}\), the set \(\{\widetilde{\pi}_0,\widetilde{\pi}_\infty,\widetilde{\pi}_1\}\) is realizable, where
\[
\widetilde{\pi}_0=[\underbrace{2,\ldots,2}_{2s+1}],~
\widetilde{\pi}_\infty=[\underbrace{3,\ldots,3}_{s+1},\underbrace{1,\ldots,1}_{s-1}],~
\widetilde{\pi}_1=[\gamma_{\sigma(1)}+3,\ \gamma_{\sigma(2)}+1,\ \gamma_{\sigma(3)}].
\]
\item[(A-2)] There exists a permutation \(\tau\) of \(\{1,2,3\}\) such that the set \(\{\widehat{\pi}_0,\widehat{\pi}_\infty,\widehat{\pi}_1\}\) is realizable, where
\[
\widehat{\pi}_0=[\underbrace{2,\ldots,2}_{2s+1}],~
\widehat{\pi}_\infty=[\underbrace{3,\ldots,3}_{s+1},\underbrace{1,\ldots,1}_{s-1}],~
\widehat{\pi}_1=[\gamma_{\tau(1)}+4,\ \gamma_{\tau(2)},\ \gamma_{\tau(3)}].
\]
\end{enumerate}

\item[(B)] The map \(f\) is of Type-B and one of the following conditions holds.
\begin{enumerate}
\item[(B-1)] There exists a permutation \(\tau\) of \(\{1,2,3\}\) such that the set \(\{\widetilde{\pi}_0,\widetilde{\pi}_\infty,\widetilde{\pi}_1\}\) is realizable, where
\[
\widetilde{\pi}_0=[\underbrace{2,\ldots,2}_{2s+1}],~
\widetilde{\pi}_\infty=[\underbrace{3,\ldots,3}_{s+1},\underbrace{1,\ldots,1}_{s-1}],~
\widetilde{\pi}_1=[\gamma_{\tau(1)}+3k,\ \gamma_{\tau(2)}+k,\ \gamma_{\tau(3)}].
\]
\item[(B-2)] There exists a permutation \(\sigma\) of \(\{1,2,3\}\) such that the set \(\{\widehat{\pi}_0,\widehat{\pi}_\infty,\widehat{\pi}_1\}\) is realizable, where
\[
\widehat{\pi}_0=[\underbrace{2,\ldots,2}_{2s+1}],~
\widehat{\pi}_\infty=[\underbrace{3,\ldots,3}_{s+1},\underbrace{1,\ldots,1}_{s-1}],~
\widehat{\pi}_1=[\gamma_{\sigma(1)}+4,\ \gamma_{\sigma(2)},\ \gamma_{\sigma(3)}].
\]
\end{enumerate}
\end{enumerate}
\end{thm}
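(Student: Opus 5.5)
The plan is to work entirely with the football decomposition of Theorem~\ref{Thm1}, reading $f$ as a cell complex. Pull back the standard metric by $f$. Each football piece is a lune $S^2_{\{1,1\}}$ covering the target once, with its poles at a preimage of $0$ and a preimage of $\infty$. The horizontal geodesic $f^{-1}(\gamma)$ is then a $4$-valent-at-generic-points graph. Its vertices are the preimages of $1$, with a preimage of local degree $\gamma_i$ being a vertex of valence $2\gamma_i$. Its faces are disks of two kinds: those containing a preimage of $0$ (a $2$-gon face, i.e.\ boundary length $4\pi$, for each $2$ in $\pi_0$) and those containing a preimage of $\infty$ (boundary length $6\pi$ for a $3$, $2\pi$ for a $1$). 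Since $\gamma_i\ge2$ and $|\pi_1|=3$, this graph has exactly three vertices. Realizability of $\mathcal D$ is therefore equivalent to the existence of such a bicoloured face structure on the sphere. Both directions of the theorem will be proved by a surgery relating this structure for $\mathcal D$ to the one for the datum of degree $4(s+1)-2=d+4$.

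\textbf{Necessity.} Start from $f$ realizing $\mathcal D$.

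In Type-A, take the simple closed curve $\Gamma\subseteq f^{-1}(\gamma)$ of length $6\pi$ through all three vertices. It bounds on one side a region tiled by faces. I would cut the sphere along $\Gamma$ and insert a "collar" consisting of one $3$-face (length $6\pi$) and two $2$-faces, together with one new $1$-face. This changes $\pi_0$ by $+2$ twos, $\pi_\infty$ by $+1$ three and $+1$ one, and raises the total valence of the vertices by $8$, i.e.\ the $\gamma_i$ by a total of $4$. The way the new edges are attached to the three vertices on $\Gamma$ distributes the increment either as $(+3,+1,0)$ or as $(+4,0,0)$. The collar can be rotated along $\Gamma$, so one checks that either all three arcs admit the $(3,1,0)$ insertion, which gives (A-1) for every $\sigma$, or a $(4,0,0)$ insertion exists, which gives (A-2).

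In Type-B, no such $\Gamma$ exists. I would instead locate a $3$-face whose boundary passes through at most two of the vertices, possibly winding $k$ times through the $\gamma_{\tau(1)}$-vertex pattern. Inserting the analogous gadget there gives the increments $(3k,k,0)$ or $(4,0,0)$, i.e.\ (B-1) or (B-2).

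\textbf{Sufficiency.} This is the reverse surgery. Given a realization of the enlarged datum, locate the inserted gadget: a $1$-face adjacent to a $3$-face and two $2$-faces with the prescribed increments at the vertices. Deleting it yields a face structure for $\mathcal D$, and the Riemann--Hurwitz count is preserved automatically. The Type-A/B label of the resulting $f$ is read off from whether the removed collar's boundary was a simple curve through all three vertices.

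The main obstacle is the combinatorial bookkeeping in both directions. For necessity, one must show that the inserted collar can always be positioned to produce exactly the listed increments; in particular, in Type-A one must justify "every $\sigma$" in (A-1) versus "some $\tau$" in (A-2). For sufficiency, one must show that a realization of the larger datum necessarily contains a removable gadget. I would attack the latter first by counting: the $s-1$ one-faces in the enlarged datum are adjacent to only three vertices, so by pigeonhole some $1$-face sits next to a $3$-face in the required configuration. I would then verify the remaining local configurations by a finite case analysis on the cyclic order of edges at the three vertices.
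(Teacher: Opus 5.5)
Your dictionary is sound: since $\pi_0=[2,\dots,2]$, realizing $\mathcal D$ amounts to finding a connected plane multigraph $G$ with $s$ trivalent vertices, $s-2$ leaves and three faces of degrees $\gamma_1,\gamma_2,\gamma_3$. Here $G$ is the dessin with its valence-$2$ black vertices erased, and your three-vertex map $H=f^{-1}(\gamma)$ is the face-coloured dual picture of $G$. The difficulty is that both surgery steps fail as stated. The sufficiency step, that every realization of the enlarged datum contains a removable gadget, is false, so no pigeonhole or case analysis can prove it. For $s=3$ the leaf's neighbour carries two further edges, which leaves only three possible graphs. Their face-degree triples are $\{1,2,7\}$, $\{1,4,5\}$, $\{1,1,8\}$ and $\{2,3,5\}$, so $\{[2,2,2,2,2],[3,3,3,1],[2,2,6]\}$ is not realizable. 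Yet the chain $w_1-a=b-c=d-w_2$ (two doubled edges, two pendant edges) has faces of degrees $2,2,10$. Hence $\{[2,2,2,2,2,2,2],[3,3,3,3,1,1],[2,2,10]\}$ is realizable, and it is exactly the enlargement in (A-2)/(B-2) with $+4$ on the part $6$. Undoing a gadget at $a$ or $d$ turns a doubled edge into a loop and gives $[1,2,7]$, which lies outside the family. Similarly, $[3,3,4]$ is not realizable for $s=3$, while $[3,3,8]$ is, via $K_4$ minus an edge with both leaves in the quadrilateral face. Read literally, the right-hand side of the theorem already presupposes a map $f$ realizing $\mathcal D$, since Type-A/B are defined only for such $f$. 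So the ``if'' direction is vacuous, and the substantive converse you aim at does not hold. The paper's one-line reversibility remark aims at the same converse, so it does not supply this step either.

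The necessity step uses a gadget that cannot exist. Suppose you cut along the $3$-cycle $\Gamma$ and glue in new faces, keeping the three vertices where they were; this is needed to keep exactly three vertices and to get additive increments. Then the region between the two copies of $\Gamma$ consists of three lenses, each bounded by the two copies of a single edge $e\subset\Gamma$. Since $e$ has a $0$-face on one side and an $\infty$-face on the other, each lens must contain a new interior edge. The edge count leaves room for only one: $d+4$ edges afterwards, versus $d+3$ once $\Gamma$ is doubled. The right move, which is what the paper's football gluings actually perform, is local. Subdivide one edge of $G$ by a new trivalent vertex and hang a new leaf from it; in $H$, one $0$-bigon is replaced by three bigons, a triangle and a monogon. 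This adds $3$ and $1$ to the two faces on either side of the edge, or $4$ to a single face if the edge is a bridge. Applying it to one of the $s-2\ge 1$ pendant edges shows that (A-2), respectively (B-2), holds for every realizable $\mathcal D$. So necessity is immediate, and neither the Type nor ``every $\sigma$'' plays any role. Note also that the degree count $3k+k=4$ forces $k=1$ in (B-1), so ``winding $k$ times'' has no content.
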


\begin{rem}
If $f$ is of Type-B, then $s\geq4$.
\end{rem}

As applications of Theorem~\ref{MThm2}, we have the following results.

\begin{cor}\label{Co1}
Let $k \ge 0$ be an integer. Then the following sets of candidate branch data are realizable:
\begin{enumerate}
\item $\displaystyle
\bigl\{
   \bigl[ \underbrace{2,\ldots,2}_{5+2k} \bigr],\
   \bigl[ \underbrace{3,\ldots,3}_{3+k},\underbrace{1,\ldots,1}_{1+k} \bigr],\
   [2+3k,\,3+k,\,5]
\bigr\}.
$
\item $\displaystyle
\bigl\{
   \bigl[ \underbrace{2,\ldots,2}_{5+2k} \bigr],\
   \bigl[ \underbrace{3,\ldots,3}_{3+k},\underbrace{1,\ldots,1}_{1+k} \bigr],\
   [2+3k,\,3,\,5+k]
\bigr\}.
$

\item $\displaystyle
\bigl\{
   \bigl[ \underbrace{2,\ldots,2}_{5+2k} \bigr],\
   \bigl[ \underbrace{3,\ldots,3}_{3+k},\underbrace{1,\ldots,1}_{1+k} \bigr],\
   [2,\,3+3k,\,5+k]
\bigr\}.
$

\item $\displaystyle
\bigl\{
   \bigl[ \underbrace{2,\ldots,2}_{5+2k} \bigr],\
   \bigl[ \underbrace{3,\ldots,3}_{3+k},\underbrace{1,\ldots,1}_{1+k} \bigr],\
   [2+k,\,3+3k,\,5]
\bigr\}.
$

\item $\displaystyle
\bigl\{
   \bigl[ \underbrace{2,\ldots,2}_{5+2k} \bigr],\
   \bigl[ \underbrace{3,\ldots,3}_{3+k},\underbrace{1,\ldots,1}_{1+k} \bigr],\
   [2+k,\,3,\,5+3k]
\bigr\}.
$

\item $\displaystyle
\bigl\{
   \bigl[ \underbrace{2,\ldots,2}_{5+2k} \bigr],\
   \bigl[ \underbrace{3,\ldots,3}_{3+k},\underbrace{1,\ldots,1}_{1+k} \bigr],\
   [2,\,3+k,\,5+3k]
\bigr\}.
$

\item $\displaystyle
\bigl\{
   \bigl[ \underbrace{2,\ldots,2}_{5+2k} \bigr],\
   \bigl[ \underbrace{3,\ldots,3}_{3+k},\underbrace{1,\ldots,1}_{1+k} \bigr],\
   [2,\,3,\,5+4k]
\bigr\}.
$
\end{enumerate}
\end{cor}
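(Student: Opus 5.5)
The plan is to run an induction on $k$, starting from an explicit map for $k=0$ and using Theorem~\ref{MThm2} in the direction ``$\mathcal D$ realizable $\Rightarrow$ the enlarged datum is realizable''. First I check the bookkeeping. In every family, $\pi_0$ has $5+2k=2s-1$ entries, so $s=3+k$. Then $\pi_\infty$ has $s$ threes and $s-2=1+k$ ones, and each $\pi_1$ sums to $10+4k=4s-2$. The partitions $\pi_1$ in items (1)--(6) are obtained from $[2,3,5]$ by adding $(3k,k,0)$ to the entries in each of the six possible orders. Item (7) is obtained by adding $4k$ to the entry $5$. So the corollary says that, starting from $\{[2^5],[3^3,1],[2,3,5]\}$ with $s=3$, one can repeatedly apply either the move $(+3,+1,0)$ with a fixed choice of roles, or the move $(+4,0,0)$ on the entry $5$.

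\textbf{Base case.} I will realize $\{[2,2,2,2,2],[3,3,3,1],[2,3,5]\}$ in degree $10$. This can be done by exhibiting permutations in $S_{10}$ of the three cycle types whose product is the identity and which generate a transitive group. Alternatively, I can draw the football decomposition of Theorem~\ref{Thm1} directly: ten footballs of angle $2\pi$, glued so that the zeros pair up and the poles form three triples and one singleton. I would then check by inspection that the horizontal geodesic $f^{-1}(\gamma)$ contains a simple closed curve $\Gamma$ of length $6\pi$ through all three preimages of $1$, so the base map is of Type-A. By the Remark, any realization with $s=3$ is of Type-A anyway, since Type-B forces $s\ge4$.

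\textbf{Inductive step.} By Theorem~\ref{MThm2}, a Type-A map satisfies (A-1) or (A-2). Rather than use the bare statement, I would go into the surgery that underlies the ``only if'' direction. One cuts $(\overline{\mathbb C},f^*{\rm d}s_0^2)$ along an arc of $\Gamma$ between two preimages of $1$ and glues in a fixed block of footballs. This block contributes two new double zeros, one new triple pole and one new simple pole. It raises the cone angles at the chosen endpoints by $6\pi$ and $2\pi$ (the $(+3,+1,0)$ move), or by $8\pi$ at a single endpoint (the $(+4,0,0)$ move). The key claim is that this surgery can be performed with any assignment of the roles $\gamma_{\sigma(1)},\gamma_{\sigma(2)},\gamma_{\sigma(3)}$ to the preimages of $1$. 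A second key claim is that the new map is again of Type-A, with a new curve $\Gamma'$ that is again simple, has length $6\pi$, and passes through all preimages of $1$. Granting these, I apply the $(+3,+1,0)$ move $k$ times with the same permutation $\sigma$, and obtain each of (1)--(6) according to which of $2,3,5$ receives $+3$ and which receives $+1$. Applying the $(+4,0,0)$ move $k$ times to the point of angle $10\pi$ gives (7).

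\textbf{Main obstacle.} I expect the hardest step to be showing that Type-A, and the freedom to choose the permutation, persist under the surgery. Theorem~\ref{MThm2} only asserts (A-1) \emph{or} (A-2), and (A-2) alone would not give all six permutations. The first thing I would try is to exhibit $\Gamma'$ explicitly: take $\Gamma$ with the cut arc replaced by the boundary arc of the inserted block, and verify that it is simple and has length $6\pi$. This would show that the stronger property (A-1) propagates along the whole induction.
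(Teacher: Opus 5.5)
Your overall route is the paper's: realize the base datum $\{[2,2,2,2,2],[3,3,3,1],[2,3,5]\}$ (the map $h$ in the proof of Theorem~\ref{MThm2}) and iterate the Type-A gluings. You are right that the bare ``(A-1) or (A-2)'' is not enough. For items (1)--(6) your plan goes through. Slit along the arc $a\subset\Gamma$ from $p_A$ to $p_B$ and insert a four-football block made of two pieces. The first is a new triangle bounded by the copy $a'$ facing the old zero-disk, an interior arc from $p_A$ to $p_B$, and a loop at $p_A$; this loop encloses a bigon, which in turn encloses a monogon. The second is a bigon between the interior arc and the other copy $a''$. This adds $6\pi$ at $p_A$ and $2\pi$ at $p_B$. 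Moreover $\Gamma'=(\Gamma\setminus a)\cup a''$ still bounds the original triple-pole disk, so Type-A and the free choice of $\sigma$ both persist.

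The gap is item (7): the $(+4,0,0)$ move cannot come from a slit along an arc of $\Gamma$. Such an arc joins two \emph{distinct} preimages $p_A\neq p_B$ of $1$. Both copies of the slit reach $p_B$ along the same direction over $\gamma$, so any block glued into the slit has a corner at $p_B$ whose angle is a positive multiple of $2\pi$. Equivalently, splitting the edge $a$ of the graph $f^{-1}(\gamma)$ already raises the valence of $p_B$, and that valence must stay even. So ``$8\pi$ at a single endpoint'' is impossible, and your induction never produces $[2,3,5+4k]$. What you need is a slit that touches only one preimage of $1$, which is what the paper's first construction uses. Cut along the meridian from the simple pole to the preimage of $1$ of angle $10\pi$; in the base map the disk of the simple pole is a monogon attached exactly at that point. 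Glue in two footballs there, then two more as in Figure~\ref{fig:New35}. The net effect is that the simple pole becomes a triple pole and two new simple poles appear, again attached at the enlarged point. That is the invariant you must carry through the induction to apply the move $k$ times; Type-A plays no role for this item. In the dessin picture, collapse the valence-$2$ black vertices to get a plane map with $s$ trivalent vertices and $s-2$ leaves. The move is then simply replacing a leaf lying in the face of degree $5+4j$ by a trivalent vertex with two new leaves.
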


\begin{proof}
The proof of Theorem~\ref{MThm2} establishes the realizability of the base candidate branch datum
\[
\bigl\{
 [2,2,2,2,2],\
  [3,3,3,1],\
  [2,3,5]
\bigr\}.
\]
Items (1)--(6) then follow immediately from part (A) of Theorem~\ref{MThm2}. Item (7) follows from part (ii) of Theorem~\ref{MThm2}, as also shown in the proof of that theorem.
\end{proof}

\begin{cor}\label{Co2}
Let $k \ge 0$ be an integer. Then the following sets of candidate branch data are realizable:
\begin{enumerate}
\item $\displaystyle
\bigl\{
   \bigl[ \underbrace{2,\ldots,2}_{7+2k} \bigr],\
   \bigl[ \underbrace{3,\ldots,3}_{4+k},\underbrace{1,\ldots,1}_{2+k} \bigr],\
   [4+3k,\,6+k,\,4]
\bigr\}.
$
\item $\displaystyle
\bigl\{
   \bigl[ \underbrace{2,\ldots,2}_{7+2k} \bigr],\
   \bigl[ \underbrace{3,\ldots,3}_{4+k},\underbrace{1,\ldots,1}_{2+k} \bigr],\
   [4+k,\,6+3k,\,4]
\bigr\}.
$

\item $\displaystyle
\bigl\{
   \bigl[ \underbrace{2,\ldots,2}_{7+2k} \bigr],\
   \bigl[ \underbrace{3,\ldots,3}_{4+k},\underbrace{1,\ldots,1}_{2+k} \bigr],\
   [4+4k,\,6,\,4]
\bigr\}.
$
\end{enumerate}
\end{cor}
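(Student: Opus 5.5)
The plan is to realize the base case $k=0$ directly and then obtain every $k\ge 1$ by induction, feeding each realization into Theorem~\ref{MThm2}, in the same way as in the proof of Corollary~\ref{Co1}. Throughout, $s=4+k$: indeed $2s-1=7+2k$, the number of parts equal to $3$ is $s=4+k$, the number of parts equal to $1$ is $s-2=2+k$, and $d=4s-2=14+4k$. In the base case $s=4$ and
\[
\mathcal D_0=\bigl\{[2,2,2,2,2,2,2],\ [3,3,3,3,1,1],\ [4,6,4]\bigr\}.
\]
Passing from $s$ to $s+1$ replaces $\pi_0,\pi_\infty$ by exactly the $\widetilde\pi_0,\widetilde\pi_\infty$ of Theorem~\ref{MThm2}. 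So the only thing to track is how $\pi_1$ changes: either $[\gamma_{\sigma(1)}+3,\gamma_{\sigma(2)}+1,\gamma_{\sigma(3)}]$, or $[\gamma_{\tau(1)}+4,\gamma_{\tau(2)},\gamma_{\tau(3)}]$.

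\emph{Step 1 (base case).} I would realize $\mathcal D_0$ explicitly. One way is to write down a transitive permutation triple in $S_{14}$: an involution $\sigma_0$ with seven $2$-cycles, and $\sigma_\infty$ of cycle type $[3,3,3,3,1,1]$, such that $(\sigma_0\sigma_\infty)^{-1}$ has cycle type $[4,6,4]$. The equivalent approach is to draw the football decomposition of Theorem~\ref{Thm1}. The latter is preferable, because I must also read off the type of the resulting map $f_0$. The goal is to exhibit a simple closed curve $\Gamma\subseteq f_0^{-1}(\gamma)$ of length $6\pi$ passing through the three preimages of $1$. I expect $f_0$ to be of Type-A, since the three parts $4,6,4$ are large enough to be threaded by one loop of length $6\pi$.

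\emph{Step 2 (inductive step).} Suppose $\mathcal D$ is realized by a Type-A map $f$ with $\pi_1=[\gamma_1,\gamma_2,\gamma_3]$. By (A-1) of Theorem~\ref{MThm2}, applied to a suitable permutation, the data with $\pi_1$ replaced by $[\gamma_1+3,\gamma_2+1,\gamma_3]$ or by $[\gamma_1+1,\gamma_2+3,\gamma_3]$ are realizable. By (A-2), the datum with $[\gamma_1+4,\gamma_2,\gamma_3]$ is realizable. Here $\gamma_1$ is the slot that starts at $4$ and $\gamma_2$ the slot that starts at $6$. Iterating these three moves $k$ times from $[4,6,4]$ yields exactly the three families:
\[
[4+3k,\,6+k,\,4],\qquad [4+k,\,6+3k,\,4],\qquad [4+4k,\,6,\,4].
\]
Note that $\mathcal D_0$ contains two parts equal to $4$, which is why the $+4$ move in item~(3) can be attached to either of them.

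\emph{Main obstacle.} The iteration is only legitimate if each new map is again of Type-A. Otherwise, at the next step only the existential conditions (B-1)/(B-2) are available, and these may not let me choose which parts get enlarged. I would therefore go into the construction underlying Theorem~\ref{MThm2}, namely gluing additional footballs along the horizontal geodesic $f^{-1}(\gamma)$. I would check that it can be carried out so that the old loop $\Gamma$ survives, possibly rerouted through the new pieces, as a simple closed curve of length $6\pi$ containing all three preimages of $1$. Verifying that the inserted footballs do not cut $\Gamma$ into a non-simple curve is the step I expect to need the most care.
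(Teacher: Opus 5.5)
\textbf{The plan fails at Step~1.} You expect $\mathcal D_0$ to be realized by a Type-A map, but every realization of $\mathcal D_0$ is of Type-B. So part (A) of Theorem~\ref{MThm2} never becomes available, and your induction cannot start.

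To see this, note that every part of $\pi_0$ equals $2$. A dessin for $\mathcal D_0$ is therefore the same thing as a connected plane graph $P$ on the poles, the black vertices being midpoints of edges of $P$. This $P$ has four trivalent vertices, two leaves, seven edges, and three faces of degrees $4,6,4$. Its cycle rank is $2$, and a short case check pins it down:
\begin{itemize}
\item a theta-shaped core always leaves a face of odd degree or a digon;
\item a self-loop creates a face of degree $1$;
\item what remains is the ``dumbbell'': a bridge $uv$, a double edge between $u$ and $w_1$ enclosing a leaf hung at $w_1$, and symmetrically a double edge between $v$ and $w_2$ enclosing a leaf at $w_2$.
\end{itemize}
The horizontal geodesic $f^{-1}(\gamma)$ is the graph dual to the dessin. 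Each of its edges crosses exactly one dessin edge (a half-edge of $P$) and joins the preimages of $1$ lying in the faces of $P$ on either side. The two faces of degree $4$ lie on opposite sides of the bridge and share no edge. Hence $f^{-1}(\gamma)$ has no edge between the two preimages of order $4$. It therefore contains no simple closed curve of length $6\pi$ (a triangle of three edges) through all of $f^{-1}(1)$. Instead the three preimages lie on two $4\pi$ circles tangent at the point of order $6$. This is exactly the Type-B block of Figure~\ref{fig:TypeB1}, which is the base the paper uses.

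\textbf{Consequence for Step~2.} You are in precisely the situation you flagged as the main obstacle. The statements (B-1) and (B-2) alone do not let you choose which parts grow. The paper gets around this by using part (B) together with Figure~\ref{fig:TypeB1}, i.e.\ the explicit gluings of four standard footballs onto that Type-B block, iterated as in part (A).

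Your reading of part (A) is also too strong. Theorem~\ref{MThm2}(A) only asserts that (A-1) \emph{or} (A-2) holds, not both. Moreover (A-2) is existential in $\tau$, so even for a Type-A map it would not let you force the $+4$ onto a part equal to $4$ rather than onto the $6$.

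\textbf{A self-contained repair.} Perform the moves directly on $P$:
\begin{itemize}
\item Subdivide one of the edges $uw_1$ by a new trivalent vertex carrying a leaf that points into the face enclosed by the digon. This changes $[4,6,4]$ by $(+3,+1,0)$.
\item Do the same but let the leaf point into the outer face. This changes $[4,6,4]$ by $(+1,+3,0)$.
\item Replace the leaf at $w_1$ by a trivalent vertex with two leaves. This adds $4$ to the enclosed face.
\end{itemize}
Each move adds one part $3$ and one part $1$ to $\pi_\infty$ and two parts $2$ to $\pi_0$, so it passes from $s$ to $s+1$. Each move can be repeated indefinitely, and together they give the three families directly, with no appeal to Theorem~\ref{MThm2}.
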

\begin{proof}
The proof of Theorem~\ref{MThm2} establishes the realizability of the base candidate branch datum
\[
\bigl\{
 [2,2,2,2,2,2],\
  [3,3,3,3,1,1],\
  [4,6,4]
\bigr\}.
\]
Consequently, items (1)--(3) follow immediately from Theorem~\ref{MThm2}(B) and Figure~\ref{fig:TypeB1}.
\end{proof}

As an application of Corollaries~\ref{Co1} and~\ref{Co2}, one can obtain all exceptional data of the form
\[
\bigl\{ \bigl[ \underbrace{2,\ldots,2}_{5+2k} \bigr],\,
\bigl[ \underbrace{3,\ldots,3}_{3+k},\underbrace{1,\ldots,1}_{1+k} \bigr],\,
[\gamma_1,\gamma_2,\gamma_3] \bigr\},
\]
with \(\gamma_i \ge 2\) for each \(i=1,2,3\). For example, when \(k=1\), it is easy to see that the following branch data are precisely all exceptional data of this form:
\begin{enumerate}
\item \[
\bigl\{ \bigl[ \underbrace{2,\ldots,2}_{7} \bigr],\,
\bigl[ 3,3,3,3,1,1 \bigr],\,
[2,5,7] \bigr\}.
\]
\item \[
\bigl\{ \bigl[ \underbrace{2,\ldots,2}_{7} \bigr],\,
\bigl[ 3,3,3,3,1,1 \bigr],\,
[3,4,7] \bigr\}.
\]
\end{enumerate}

The remainder of this paper is organised as follows. Section~\ref{sec2} reviews the classical theory of \emph{dessins d'enfants} and observes that dessins d'enfants and the football decomposition constitute two complementary approaches to the Hurwitz problem. Section~\ref{sec3} is devoted to the proof of Theorem~\ref{MThm1}. Section~\ref{sec4} is devoted to the proof of Theorem~\ref{MThm2}.

\section{Rational maps with three branch points}\label{sec2}

In this section we recall two approaches---dessins d'enfants and football decomposition---for studying the realizability of rational maps with three branch points.  First, we recall some notions of graphs on a surface.

\textbf{Dessins d'enfants.}

A technique that has been successfully employed to address the Hurwitz existence problem is based on the notion of \emph{dessin d'enfant} (see \cite{AG97,PB94,SA04} and the references therein). This method applies to candidate branch data with three branch points on the sphere.

\begin{defn}\cite{P20}
A \emph{graph} $\Gamma$ on a surface $M$ is a subset of $M$ consisting of finitely many points (the \emph{vertices}) and finitely many simple arcs (the \emph{edges}), possibly closed, such that each arc has its endpoints at vertices (or its single endpoint at a vertex if it is a loop), and the interiors of any two distinct arcs are disjoint. The \emph{valence} of a vertex $v$ is the number of edges for which $v$ is an endpoint, plus twice the number of loops at $v$. Equivalently, the valence of $v$ is the number of \emph{germs} of edges incident to $v$. The graph $\Gamma$ is called \emph{bipartite} if its vertices are coloured black and white so that every edge connects vertices of opposite colours.

A \emph{complementary region} of $\Gamma$ is a connected component $R$ of $M \setminus U$, where $U$ is the interior of a regular neighbourhood $N$ of $\Gamma$ in $M$. If $\Gamma$ is bipartite, we transfer the vertex colours to $\partial R$ by pulling back the vertices of $\Gamma$ via the restriction to $\partial R$ of the natural retraction $N \to \Gamma$. On each boundary component of $R$, black and white vertices alternate; consequently, the number of black vertices equals the number of white vertices on $\partial R$, and we call this common number the \emph{length} of $R$. These definitions are well-defined up to coloured homeomorphism, independent of the choice of $N$. One may also view $R$ as the closure of a component of $M \setminus \Gamma$, though the induced map on the boundary need not be an embedding, so some vertices may contribute multiple times to the length. A \emph{dessin d'enfant} on $M$ is a bipartite graph on $M$ all of whose complementary regions are discs.
\end{defn}

\begin{thm}\cite{P20}
A candidate branch datum $\{\pi_0, \pi_\infty, \pi_1\}$ for a branched covering $S^2 \to S^2$ is realizable if and only if there exists a dessin d'enfant $\Gamma$ on $S^2$ such that the valences of its black vertices are the entries of the partition $\pi_0$, the valences of its white vertices are the entries of $\pi_\infty$, and the lengths of its complementary regions are the entries of $\pi_1$.
\end{thm}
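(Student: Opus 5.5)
The plan is to prove the two directions separately. The bridge between them is the preimage of a single arc joining $0$ and $\infty$ in the target sphere, chosen to avoid the third branch point $1$. Throughout, I identify a branched covering $S^2\to S^2$ with a rational map only at the end, via uniformization, as the paper already allows.

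\emph{Necessity.} Suppose $f:S^2\to S^2$ realizes $\{\pi_0,\pi_\infty,\pi_1\}$ with branch points $0,\infty,1$. Let $I=[-\infty,0]$ be the closed negative real axis; it is an embedded arc from $\infty$ to $0$ not containing $1$.

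1. Set $\Gamma=f^{-1}(I)$. Colour the points of $f^{-1}(0)$ black and those of $f^{-1}(\infty)$ white.
2. Over the interior of $I$ the map $f$ is an unbranched $d$-sheeted covering. Hence $\Gamma$ consists of exactly $d$ open edges, each mapped homeomorphically onto the interior of $I$, and each edge joins a black vertex to a white vertex. So $\Gamma$ is a bipartite graph.
3. Near a point $x\in f^{-1}(0)$ of local degree $\alpha$, the local model $z\mapsto z^{\alpha}$ shows that exactly $\alpha$ germs of edges emanate from $x$. Thus the black valences are the entries of $\pi_0$, and similarly the white valences are the entries of $\pi_\infty$.
4. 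The complement $D=S^2\setminus I$ is an open disc containing exactly one branch value, namely $1$. Each component $U$ of $f^{-1}(D)$ is a finite cover of $D$ branched over at most one point. By Riemann--Hurwitz applied to that component (or by the classification of coverings of a punctured disc), $U$ is a disc containing exactly one preimage $y$ of $1$, and $f|_U$ is equivalent to $z\mapsto z^{\beta}$, where $\beta$ is the local degree at $y$.
5. The boundary circle of $D$ traverses $I$ twice, once in each direction. Its lift along $\partial U$ therefore traverses $2\beta$ edge-germs, alternating black and white. So the length of the region $U$ equals $\beta$, and the region lengths are exactly the entries of $\pi_1$.

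\emph{Sufficiency.} Given a dessin $\Gamma$ with the prescribed valences and lengths, I build $f$ piece by piece.

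1. Map each vertex to $0$ or $\infty$ according to its colour.
2. Map each edge homeomorphically onto $I$, respecting colours.
3. Handle each complementary region $R$ of length $\ell$. Regard $R$ abstractly as a closed disc whose boundary circle is subdivided into $2\ell$ alternating edges. The boundary map $\partial R\to\partial D$ is then an $\ell$-fold covering of the circle $\partial\overline D$ (where $\overline D$ is $D$ cut open along $I$). Extend it by the cone construction to a map $R\to\overline D$ modelled on $z\mapsto z^{\ell}$, sending the centre of $R$ to $1$.
4. The extensions agree on shared edges because every edge was mapped by one fixed homeomorphism to $I$. Hence they glue to a continuous map $f:S^2\to S^2$.
5. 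Check that $f$ is a branched covering with the right local degrees. It is a local homeomorphism on the interiors of edges. At a vertex of valence $\alpha$, the $2\alpha$ adjacent sector pieces close up into the model $z^{\alpha}$. At the centre of a region it has the model $z^{\ell}$.
6. The degree is $\sum_{\pi_0}\alpha_j=d$, since each of the $d$ edges maps to $I$ exactly once.
7. By uniformization, the pulled-back complex structure makes $f$ a rational map.

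\emph{Main obstacle.} The main subtlety is step 3 of sufficiency when a region's boundary is not embedded in $S^2$, that is, when a vertex or edge appears several times on $\partial R$. To handle this, I define $f$ on the abstract closed disc $R$ together with its boundary map to $\Gamma$, and then push the map forward to $S^2$. This is exactly why the definition of a complementary region via a regular neighbourhood is used.

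Two further points must be checked along the way. First, the map must be well defined at vertices visited repeatedly. This follows because the sectors around a vertex are cyclically ordered, and the colours alternate, matching the cyclic order of sectors around $0$ or $\infty$ in the target. Second, $\Gamma$ must be connected, so that the source is a single sphere. This holds because every complementary region is a disc.
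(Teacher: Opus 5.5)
Your proof is correct and is the standard argument. The paper does not prove this statement; it quotes it from \cite{P20}. The only justification it sketches is to take $f^{-1}(\gamma_1)$ for a meridian $\gamma_1$ from $0$ to $\infty$ avoiding $1$, giving a bipartite graph with the zeros and poles as vertices and $d$ edges, and that is exactly your necessity construction with $I=\gamma_1$. In the sufficiency direction, make the choice of slit side explicit: edge-sides traversed from black to white (face on the left) go to one fixed side of $I$ and white-to-black ones to the other, so the two sides of each edge land on opposite sides and the glued map is an orientation-preserving local homeomorphism across edges; at a vertex of valence $\alpha$ this gives $\alpha$ corners, not $2\alpha$, each covering a full slit neighbourhood, hence the model $z^{\alpha}$.
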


Let $f: \overline{\mathbb{C}} \to \overline{\mathbb{C}}$ be a rational map of degree $d\geq3$ with branch points at $0,\infty,1$ and branch datum $\{\pi_0, \pi_1, \pi_2\}$, where
\[
\pi_0 = [\alpha_1, \ldots, \alpha_r],\qquad
\pi_\infty = [\beta_1, \ldots, \beta_s],\qquad
\pi_1 = [\gamma_1, \ldots, \gamma_t],
\]
correspond to the points $0,\infty$ and $1$, respectively, and satisfy $r+s+t=d+2$.

Let $\mathrm{d}s_0^2$ be the standard metric on the target sphere $\overline{\mathbb{C}}$. There is a unique geodesic segment of length $\pi$ (i.e. a meridian) that joins the three branch points $0$, $\infty$, and $1$; however, there are infinitely many geodesic segments of length $\pi$ joining $0$ and $\infty$ that do not pass through the third branch point $1$. Choose such a geodesic segment and denote it by $\gamma_1$ (see Figure~\ref{fig:3bp1}). Then the preimage $f^{-1}(\gamma_1)$ forms a connected bipartite graph on the source sphere $\overline{\mathbb{C}}$. The vertices of this graph are the zeros and poles of $f$, and the number of edges equals $d$.

\begin{figure}[htbp]
\centering
\includegraphics[width=5cm]{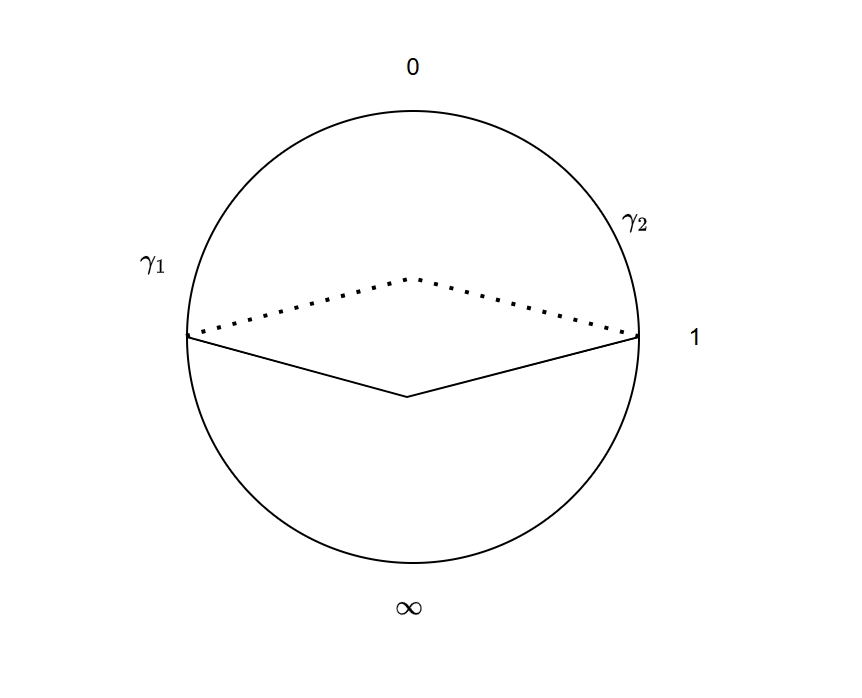}
\caption{Geodesic segments $\gamma_1$ and $\gamma_2$.}
\label{fig:3bp1}
\end{figure}

If, instead, we choose the unique geodesic segment $\gamma_2$ (see Figure~\ref{fig:3bp1}) that joins all three branch points $0$, $\infty$, and $1$, then by cutting along the preimage $f^{-1}(\gamma_2)$ including the conical singularities of $f^*\mathrm{d}s_0^2$, we obtain the football decomposition of $(\overline{\mathbb{C}}, f^*\mathrm{d}s_0^2)$. Since $0$, $\infty$, and $1$ are branch points of $f$, it is straightforward to see that each piece in this decomposition is a football whose cone angles are integer multiples of $2\pi$.

Let $n$ be the total number of pieces in the football decomposition. Since each point of order at least $2$ in $\pi_2$ lies on the boundary of the decomposition, we have
\[
n = \sum_{\gamma_j \ge 2} \gamma_j.
\]
Denote these footballs by
\[
S^2_{\{x_1, x_1\}}, \ldots, S^2_{\{x_n, x_n\}},
\]
where, for each $i$, $x_i \in \mathbb{Z}^+$ and $\sum_{i=1}^n x_i = d$. Moreover, each football $S^2_{\{x_i, x_i\}}$ contains $x_i$ edges of the bipartite graph.

Note that one of the two conical singularities of each football corresponds to a point whose order appears in the partition $\pi_0$, while the other singularity corresponds to a point whose order appears in $\pi_\infty$. Let $a_{ij}$ be the total angle contributed by the footballs that simultaneously contain the point of order $\alpha_i$ (from $\pi_0$) and the point of order $\beta_j$ (from $\pi_\infty$). Then there exists an $r\times s$ nonnegative integer matrix $A=(a_{ij})_{r\times s}$ satisfying
\[
\begin{cases}
\sum\limits_{j=1}^{s} a_{ij} = \alpha_i, & i=1,\ldots,r,\\[6pt]
\sum\limits_{i=1}^{r} a_{ij} = \beta_j, & j=1,\ldots,s,
\end{cases}
\]
and for each $i,j$, $a_{ij}$ equals the sum of some of the numbers $x_1,\dots,x_n$. Moreover, if we regard $A$ as the bipartite adjacency matrix of the graph, then that graph must be connected. Consequently, we obtain the following result.

\begin{thm}\cite{Wei26}\label{Belyi}
Let $f: \overline{\mathbb{C}} \to \overline{\mathbb{C}}$ be a rational map of degree $d\geq3$ with branch points at $0,\infty,1$ and branch datum $\{\pi_0, \pi_1, \pi_2\}$, where
\[
\pi_0 = [\alpha_1, \ldots, \alpha_r],\qquad
\pi_1 = [\beta_1, \ldots, \beta_s],\qquad
\pi_2 = [\gamma_1, \ldots, \gamma_t],
\]
correspond to the points $0,\infty$ and $1$, respectively, and satisfy $r+s+t=d+2$. Then there exist $n=\sum_{\gamma_j\ge 2}\gamma_j$ positive integers $x_1,\ldots,x_n$, an $r\times s$ nonnegative integer connected matrix $A=(a_{ij})_{r\times s}$, and integers $\delta_{ij}^k\in\{0,1\}$ for each $i,j,k$ satisfying the following system:
\begin{equation}\label{NP}
\begin{cases}
\sum\limits_{k=1}^{n} x_k = d,\\[8pt]
\sum\limits_{j=1}^{s} a_{ij} = \alpha_i, & i=1,\ldots,r,\\[6pt]
\sum\limits_{i=1}^{r} a_{ij} = \beta_j, & j=1,\ldots,s,\\[6pt]
a_{ij} = \sum\limits_{k=1}^{n} \delta_{ij}^k x_k, & i=1,\ldots,r,\ j=1,\ldots,s,\\[6pt]
\sum\limits_{j=1}^{s}\sum\limits_{i=1}^{r} \delta_{ij}^k = 1, & k=1,\ldots,n.
\end{cases}
\end{equation}
\end{thm}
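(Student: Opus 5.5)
We need to prove Theorem \ref{Belyi}: given $f$, there exist positive integers $x_k$, a connected matrix $A$, and indicators $\delta^k_{ij}$ satisfying system \eqref{NP}. The plan is to read all of these data off the football decomposition of $f$ obtained by cutting along $f^{-1}(\gamma_2)$, where $\gamma_2$ is the meridian through $0$, $1$, $\infty$.

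\textbf{Step 1: the pieces are footballs with integer angles.} Apply Theorem \ref{Thm1} to $(\overline{\mathbb{C}},f^*\mathrm{d}s_0^2)$. The target sphere cut along the meridian $\gamma_2$ is a single bigon with vertices $0$ and $\infty$ and angle $2\pi$. Over the complement of $\gamma_2$, $f$ is an unbranched $d$-sheeted covering of a disc, hence trivial. So $f^{-1}(\overline{\mathbb{C}}\setminus\gamma_2)$ consists of $d$ open bigons, each mapped isometrically onto the model bigon, with one vertex a zero of $f$ and the other a pole of $f$.

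\textbf{Step 2: group the bigons into footballs.} Two bigons are glued along a common preimage arc of $\gamma_2$. Make them adjacent exactly when the arc they share contains no preimage of $1$ of order at least $2$; points of order $1$ are regular and impose no cut. Each maximal chain of adjacent bigons around a fixed zero $z$ and pole $p$, closed off by meridians through ramified preimages of $1$, is a football $S^2_{\{x,x\}}$ with $x$ equal to the number of bigons in the chain.

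The key count: around each preimage $q$ of $1$ of order $\gamma_j\ge2$ there are $2\gamma_j$ sectors, alternating between the two sides of $\gamma_2$. Hence the cut meridians through $q$ number $\gamma_j$, and every cut meridian passes through exactly one such $q$. Cutting the cyclic sequence of bigons around each zero (or around each pole) at these meridians gives the number of football pieces,
\[
n=\sum_{\gamma_j\ge2}\gamma_j.
\]
One should check that the cyclic arrangement around a zero–pole pair is consistent, i.e. that each football has exactly one zero vertex and one pole vertex. This holds because the two ends of every bigon are a zero and a pole.

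\textbf{Step 3: define $A$ and $\delta$.} Label the footballs $k=1,\dots,n$ with angles $x_k\ge1$. Set $\delta^k_{ij}=1$ exactly when football $k$ has its zero at the point of order $\alpha_i$ and its pole at the point of order $\beta_j$, and put $a_{ij}=\sum_k\delta^k_{ij}x_k$. Then the last two equations of \eqref{NP} hold by construction. The first equation holds because the footballs together contain the $d$ bigons. The row sums equal $\alpha_i$, since the total angle at a zero of order $\alpha_i$ is $2\pi\alpha_i$ and the footballs at that zero partition this angle; the column-sum equations follow the same way at the poles.

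\textbf{Step 4: connectivity of $A$.} The union of the footballs is the connected source sphere, and footballs meet only along meridians joining zeros to poles. So the bipartite graph on zeros and poles with an edge whenever $a_{ij}>0$ is connected. Equivalently, this graph is a quotient of the connected dessin $f^{-1}(\gamma_1)$.

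The main obstacle is Step 2: arguing rigorously that cutting exactly at the meridians through ramified preimages of $1$ yields regions isometric to footballs, and that their number is exactly $\sum_{\gamma_j\ge2}\gamma_j$. I would handle this by a local analysis at each preimage of $1$, using the model $z\mapsto z^{\gamma_j}$. Everything else is bookkeeping.
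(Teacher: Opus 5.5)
Your proposal is correct and follows essentially the same route as the paper. Both cut along $f^{-1}(\gamma_2)$, regroup the $d$ sheets into footballs separated by meridians through ramified preimages of $1$, count $n=\sum_{\gamma_j\ge2}\gamma_j$, let $a_{ij}$ be the total angle shared by the $i$-th zero and the $j$-th pole, and get connectivity from the connectedness of the sphere or of the dessin. Your write-up is more explicit than the paper's; the one detail worth adding is that every zero carries at least one cut meridian (otherwise its bigons close up into the whole sphere, contradicting that $1$ is a branch point), which the cyclic count needs in order to give exactly $\sum_{\gamma_j\ge2}\gamma_j$ pieces.
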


\begin{rem}
 It should be pointed out that solving the system of equations (\ref{NP}) is NP-hard in Discrete Mathematics and Combinatorics.
\end{rem}

\section{Proof of Theorem \ref{MThm1}}\label{sec3}

\begin{proof}
\textbf{Necessity.}
It suffices to treat the case $s=3$.

Assume that $\mathcal D$ is realizable. Then there exists a rational map $f: \overline{\mathbb{C}} \to \overline{\mathbb{C}}$ with branch datum $\mathcal D$ and branch points $0,\infty,1$, corresponding respectively to the partitions $\pi_0,\pi_\infty,\pi_1$. Since $\gamma_1=1$ and $\gamma_i\ge 2$ for $i=2,3$, Theorem~\ref{Belyi} implies that the football decomposition of $f$ consists of eight standard footballs $S^2_{\{1,1\}}$ and one football $S^2_{\{2,2\}}$. As all parts of $\pi_0$ are equal to $2$ and all parts of $\pi_\infty$ are at most $3$, we see that the three preimages of the branch point $1$ lie on two tangent circles of length $4\pi$; in other words, $f$ is of Type-B. Hence, after removing two footballs $S^2_{\{1,1\}}$ from $f$, we obtain a block formed by six standard footballs $S^2_{\{1,1\}}$ and one football $S^2_{\{2,2\}}$ glued as in Figure~\ref{fig:New11}. This block corresponds to a rational map $g$ of degree $8$ with branch datum
\[
\bigl\{\pi_0^g=[3,3,2],\quad \pi_\infty^g=[2,2,2,2],\quad \pi_1^g=[1,2,5]\bigr\}.
\]

\begin{figure}[htbp]
\centering
\includegraphics[width=5cm]{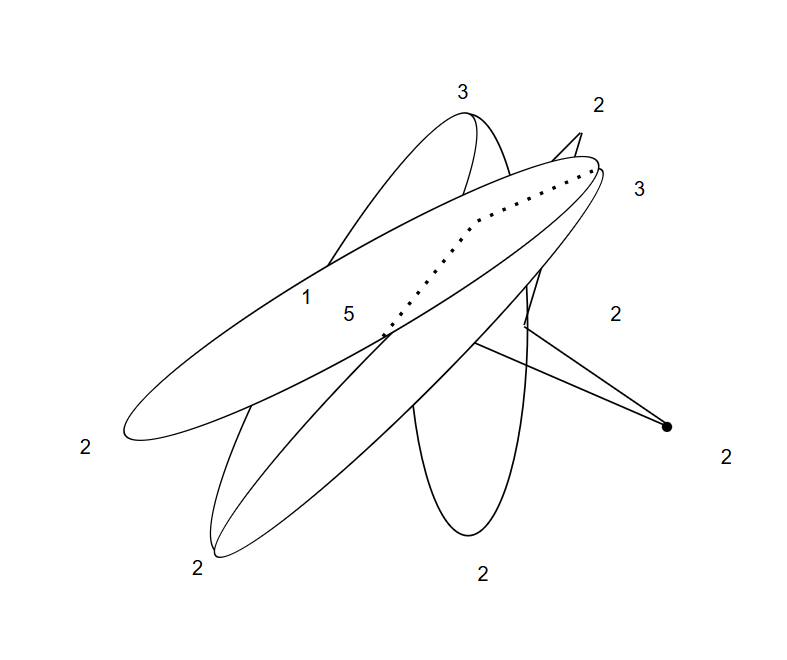}
\caption{The map $g$.}
\label{fig:New11}
\end{figure}

From the figure, the point corresponding to the part $2$ in $\pi_0^g$ (indicated by the black dot in Figure~\ref{fig:New11}) is joined by two meridians to the points corresponding to the parts $2$ and $5$ in $\pi_1^g$. Consequently, there are exactly two ways to glue two footballs $S^2_{\{1,1\}}$ onto $g$ along these two meridians, yielding rational maps $h_1$ and $h_2$ whose branch data are (see Figure~\ref{fig:New12})
\[
\bigl\{\pi_0^{h_1}=[3,3,3,1],\ \pi_1^{h_1}=[1,2,7],\ \pi_\infty^{h_1}=[2,2,2,2,2]\bigr\},
\]
and
\[
\bigl\{\pi_0^{h_2}=[3,3,3,1],\ \pi_1^{h_2}=[1,4,5],\ \pi_\infty^{h_2}=[2,2,2,2,2]\bigr\},
\]
respectively. This proves the necessity (and simultaneously establishes the sufficiency) for $s=3$.

\begin{figure}[htbp]
\centering
\includegraphics[width=9cm]{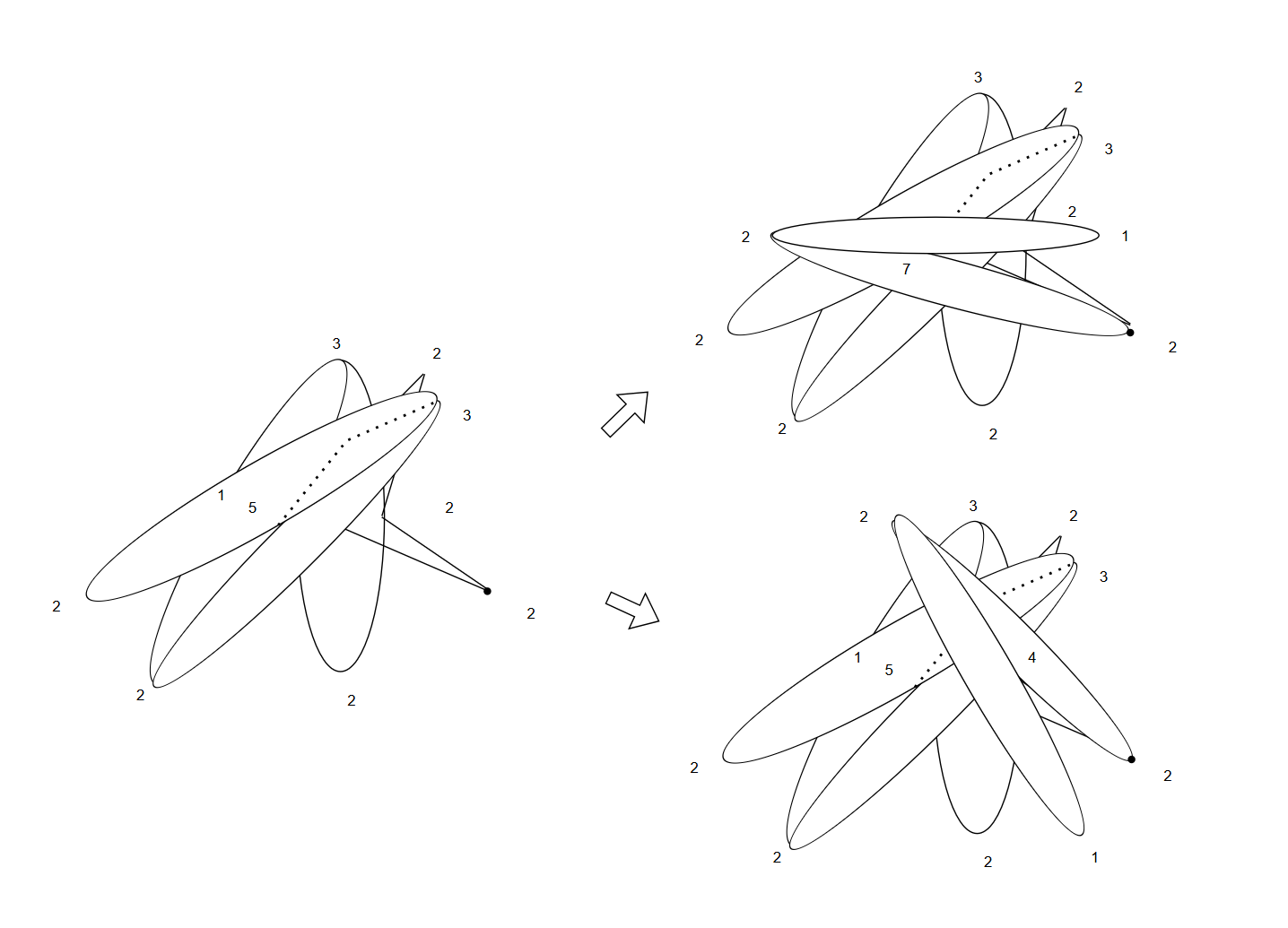}
\caption{The maps $h_1$ and $h_2$.}
\label{fig:New12}
\end{figure}

\textbf{Sufficiency.}
For \(s=3\), realizability was already established in the necessity argument. It remains to prove that \(\mathcal D\) is realizable for all \(s\ge 4\). We divide the proof into two cases.

\textbf{Case 1: \(\gamma_{2}\leq s-1\).}
We first construct a rational map \(g\) with branch datum
\[
\bigl\{[\underbrace{2,\ldots,2}_{\gamma_{2}}],\ [\underbrace{2,\ldots,2}_{\gamma_{2}}],\ [\gamma_{2},\gamma_{2}]\bigr\}.
\]
Then, by gluing \(4s-4-2\gamma_{2}\) standard footballs \(S^2_{\{1,1\}}\) and one football \(S^2_{\{2,2\}}\) onto \(g\) through a point corresponding to the order \(\gamma_{2}\), we obtain the desired rational map (see Figure~\ref{fig:New13}).

\begin{figure}[htbp]
\centering
\includegraphics[width=9cm]{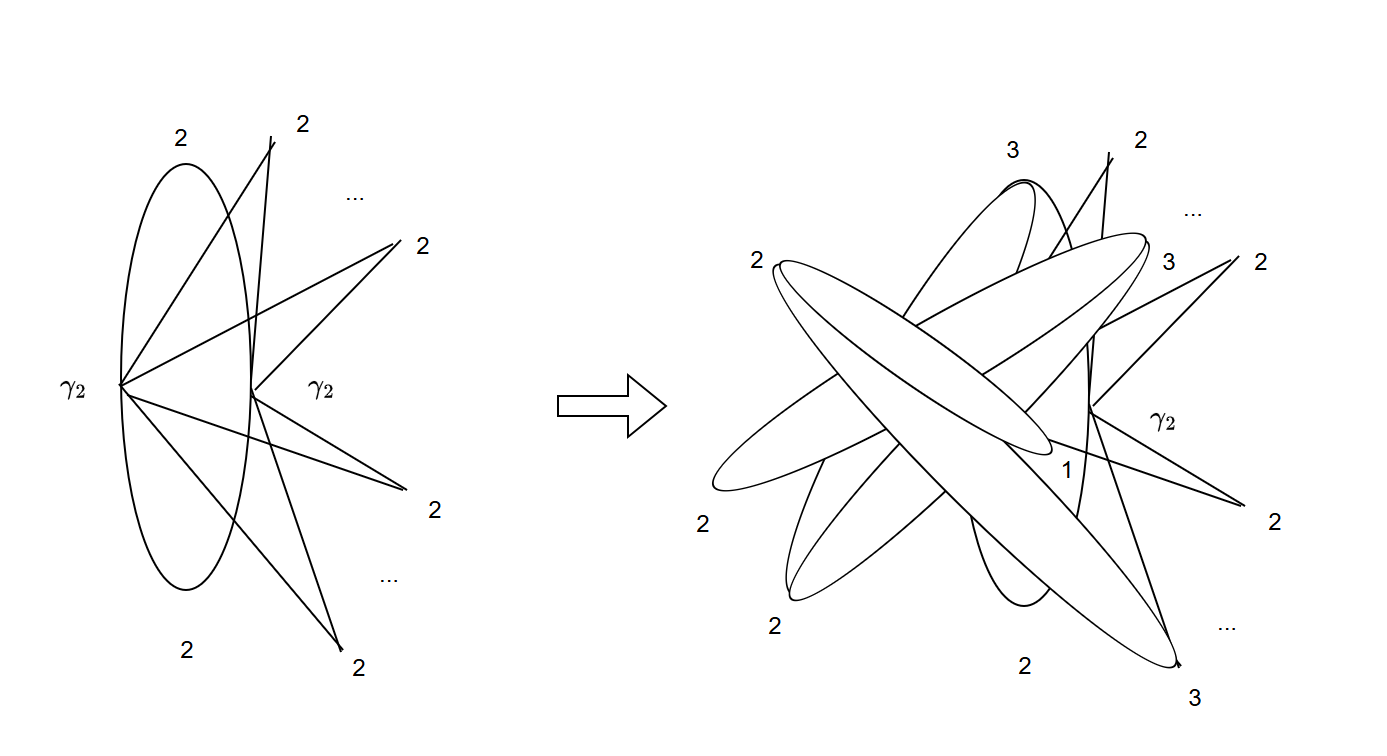}
\caption{Construction of the rational map in Case 1.}
\label{fig:New13}
\end{figure}

\textbf{Case 2: \(\gamma_{2}>s-1\).}
Since \(\gamma_{2}<\gamma_{3}\) and \(\gamma_{2}+\gamma_{3}=4s-3\), we have \(\gamma_{2}\leq 2s-2\) and \(\gamma_{2}-(s-1)\leq s-1\).
If \(\gamma_{2}-(s-1)\) is even, we begin with a rational map \(g\) whose branch datum is
\[
\bigl\{[\underbrace{2,\ldots,2}_{s-1}],\ [\underbrace{2,\ldots,2}_{s-1}],\ [s-1,s-1]\bigr\}.
\]
Then, by gluing \(\gamma_{2}-(s-1)\) standard footballs \(S^2_{\{1,1\}}\) onto \(g\) through a point corresponding to the order \(s-1\), we obtain a rational map \(h\) whose branch datum is
\[
\bigl\{[\underbrace{2,\ldots,2}_{\frac{s-1+\gamma_{2}}{2}}],\ [\underbrace{3,\ldots,3}_{\frac{\gamma_{2}-(s-1)}{2}},\underbrace{2,\ldots,2}_{\frac{3(s-1)-\gamma_{2}}{2}},\underbrace{1,\ldots,1}_{\frac{\gamma_{2}-(s-1)}{2}}],\ [\gamma_{2},s-1]\bigr\}.
\]
Finally, by gluing \(3s-3-\gamma_{2}\) standard footballs \(S^2_{\{1,1\}}\) and one football \(S^2_{\{2,2\}}\) onto \(h\) through the point corresponding to the order \(s-1\), we obtain the desired rational map.

If \(\gamma_{2}-(s-1)\) is odd, then \(\gamma_{2}-s\) is an even. When  \(\gamma_{2}-s\geq 2\), in this case, we first construct a rational map \(g\) with branch datum
\[
\bigl\{[\underbrace{2,\ldots,2}_{s-1}],\ [\underbrace{2,\ldots,2}_{s-1}],\ [s-1,s-1]\bigr\}.
\]

Then, by gluing \(\gamma_{2}-s\) standard footballs \(S^2_{\{1,1\}}\) and one football $S^{2}_{\{2,2\}}$ onto \(g\) through a point corresponding to the order \(s-1\), we obtain a rational map \(h\) whose branch datum is

\[
\bigl\{[\underbrace{2,\ldots,2}_{\frac{s+\gamma_{2}}{2}}],\ [\underbrace{3,\ldots,3}_{\frac{\gamma_{2}-s+2}{2}},\underbrace{2,\ldots,2}_{\frac{3s-\gamma_{2}-2}{2}},\underbrace{1,\ldots,1}_{\frac{\gamma_{2}-s-2}{2}}],\ [1,\gamma_{2},s-1]\bigr\}.
\]
Again, as in the previous case, gluing $3s-2-\gamma_{2}$ standard footballs \(S^2_{\{1,1\}}\) yields the desired realization of \(\mathcal D\).

When  \(\gamma_{2}-s=0\), i.e., $\gamma_{2}=s$, in this case, we first construct a rational map \(g\) with branch datum
\[
\bigl\{[\underbrace{2,\ldots,2}_{s-2}],\ [\underbrace{2,\ldots,2}_{s-2}],\ [s-2,s-2]\bigr\}.
\]

Then, by gluing \(2\) standard footballs \(S^2_{\{1,1\}}\) onto \(g\) through a point corresponding to the order \(s-2\), we obtain a rational map \(h\) whose branch datum is

\[
\bigl\{[\underbrace{2,\ldots,2}_{s-1}],\ [3,\underbrace{2,\ldots,2}_{s-3},1],\ [s,s-2]\bigr\}.
\]
Again, as in the previous case, gluing $2s-2$ standard footballs \(S^2_{\{1,1\}}\) and one football $S^{2}_{\{2,2\}}$ onto \(h\) yields the desired realization of \(\mathcal D\).
\end{proof}

\section{Proof of Theorem \ref{MThm2}}\label{sec4}

\begin{proof}
\textbf{Necessity.}

Assume that $\mathcal D$ is realizable. Then there exists a rational map $f: \overline{\mathbb{C}} \to \overline{\mathbb{C}}$ with branch datum $\mathcal D$ and branch points $0,\infty,1$, corresponding respectively to the partitions $\pi_0,\pi_\infty,\pi_1$. Since $\gamma_i \ge 2$ for each $i=1,2,3$, Theorem~\ref{Belyi} implies that the football decomposition of $f$ consists of $4s-2$ standard footballs $S^2_{\{1,1\}}$. Moreover, because all entries of $\pi_0$ are equal to $2$ and those of $\pi_\infty$ are either $3$ or $1$, the three preimages of the branch point $1$ lie either on a single circle of length $6\pi$ (Type-A) or on two tangent circles of length $4\pi$ (Type-B).

We first treat the Type-A case. After removing some standard footballs from $f$, we obtain a block formed by eight standard footballs $S^2_{\{1,1\}}$ glued as shown in Figure~\ref{fig:New31}. This block corresponds to a rational map $g$ of degree $8$ with branch datum
\[
\bigl\{\pi_0^g=[3,3,2],\quad \pi_1^g=[3,3,2],\quad \pi_\infty^g=[2,2,2,2]\bigr\}.
\]

\begin{figure}[htbp]
\centering
\begin{minipage}{0.3\textwidth}
    \centering
    \includegraphics[width=\linewidth]{New31}
    \caption{The map $g$.}
    \label{fig:New31}
\end{minipage}
\hfill
\begin{minipage}{0.55\textwidth}
    \centering
    \includegraphics[width=\linewidth]{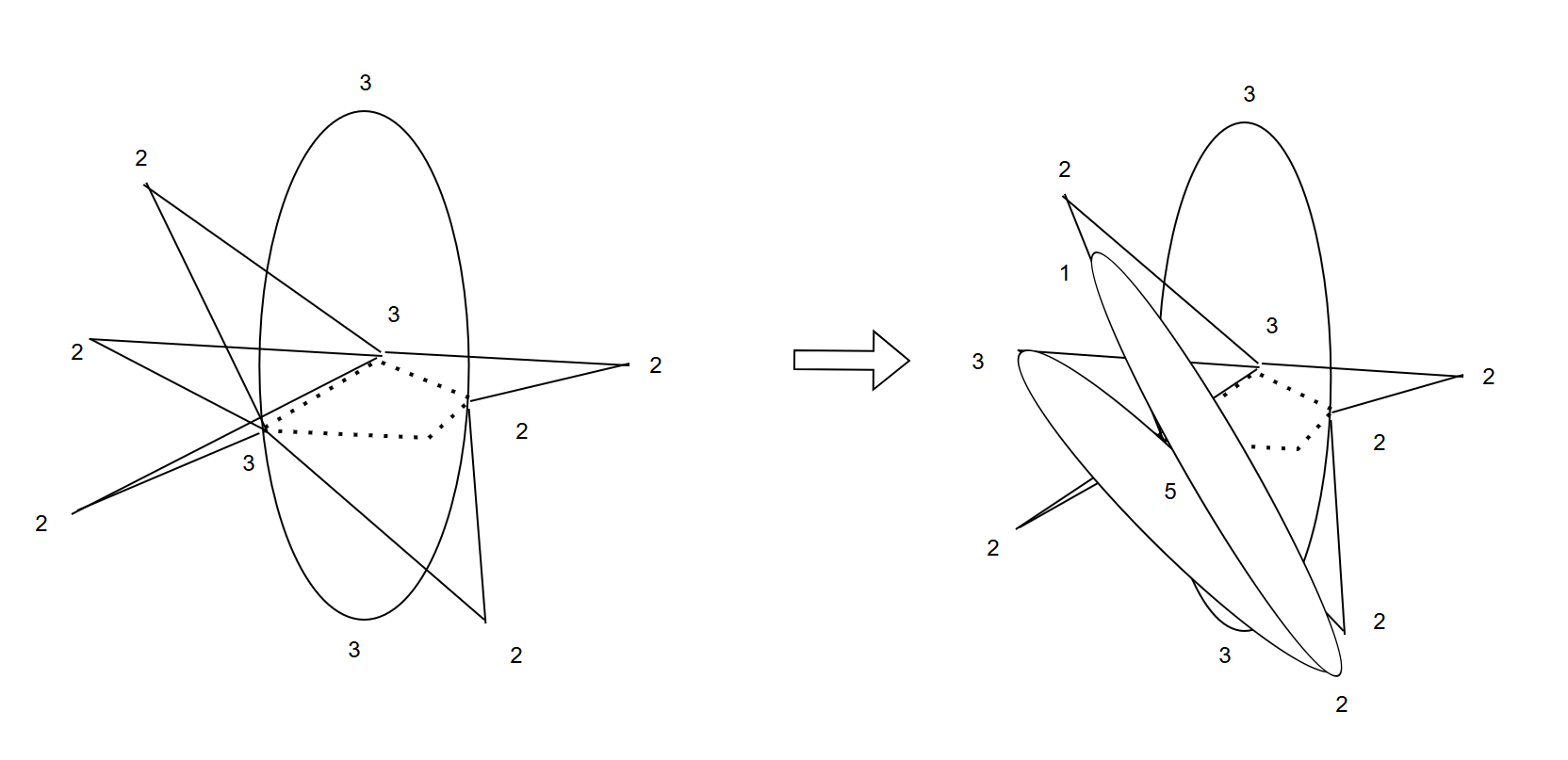}
    \caption{The map $h$.}
    \label{fig:New32}
\end{minipage}
\end{figure}

From the figure, the point corresponding to the part $2$ in $\pi_0^g$ (indicated by the black dot) is joined by two meridians to the points corresponding to the parts $3$ in $\pi_1^g$. Thus we may glue two standard footballs onto $g$ to obtain a rational map $h$ with branch datum (see Figure~\ref{fig:New32})
\[
\bigl\{\pi_0^h=[3,3,3,1],\quad \pi_1^h=[3,5,2],\quad \pi_\infty^h=[2,2,2,2,2]\bigr\}.
\]

Note that the point corresponding to the part $1$ in $\pi_0^h$ is connected by a meridian to the (unique) point corresponding to the part $5$ in $\pi_1^h$. Consequently, there are two ways to obtain a rational map whose branch datum contains the partitions $[3,3,3,3,1]$ and $[2,2,2,2,2]$ by gluing four standard footballs onto $h$.

The first construction proceeds as follows. Glue two standard footballs directly along the geodesic connecting the point for the part $1$ in $\pi_0^h$ to the point for the part $5$ in $\pi_1^h$. This yields a rational map with branch datum
\[
\bigl\{[3,3,3,3],\ [3,7,2],\ [2,2,2,2,2,1,1]\bigr\}.
\]

\begin{figure}[htbp]
\centering
\includegraphics[width=12cm]{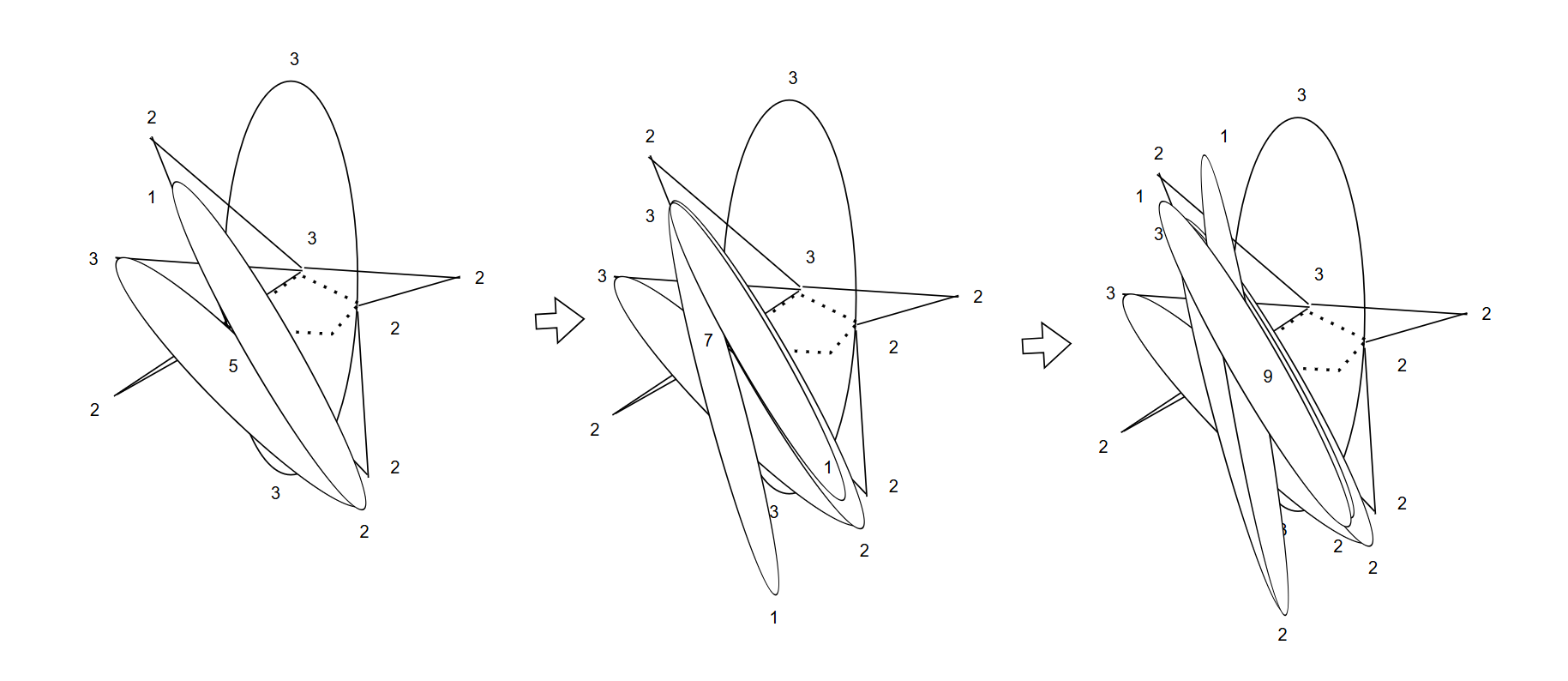}
\caption{Constructions of the desired map.}
\label{fig:New35}
\end{figure}

Then, gluing two additional standard footballs onto this new map as shown in Figure~\ref{fig:New35} produces a rational map with branch datum
\[
\bigl\{[3,3,3,3,1,1],\ [3,9,2],\ [2,2,2,2,2,2,2]\bigr\}.
\]

The second construction is as follows. Choose two points in the preimage of $1$ (for instance, the points corresponding to the parts $3$ and $5$ in the partition $[3,5,2]$) and glue two standard footballs as indicated in the left part of Figure~\ref{fig:New34}. This gives a rational map with branch datum
\[
\bigl\{[3,3,3,2,1],\ [4,6,2],\ [2,2,2,2,2,2]\bigr\}.
\]

\begin{figure}[htbp]
\centering
\includegraphics[width=8cm]{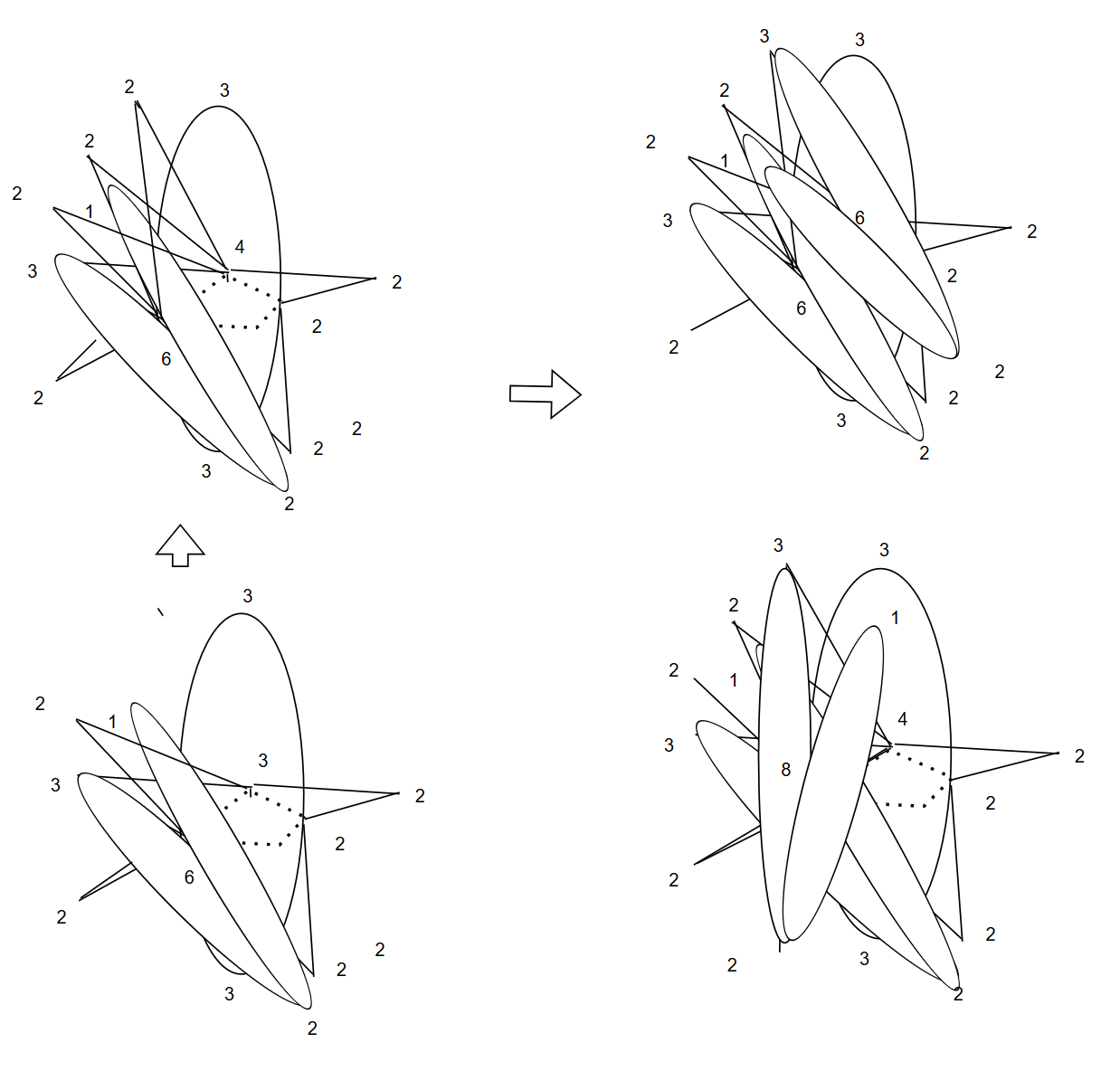}
\caption{The gluing constructions.}
\label{fig:New34}
\end{figure}

Gluing two more standard footballs onto this new map as shown in the right part of Figure~\ref{fig:New34} yields a rational map whose branch datum is either
\[
\bigl\{[3,3,3,3,1,1],\ [6,6,2],\ [2,2,2,2,2,2,2]\bigr\}\quad\text{(top)}
\]
or
\[
\bigl\{[3,3,3,3,1,1],\ [4,8,2],\ [2,2,2,2,2,2,2]\bigr\}\quad\text{(bottom)}.
\]

Iterating this gluing procedure establishes part (A) of Theorem~\ref{MThm2}.

We now turn to the Type-B case. After deleting some standard footballs from $f$, we obtain a block consisting of $14$ standard footballs $S^2_{\{1,1\}}$ glued as shown in the right part of Figure~\ref{fig:TypeB1}. This block corresponds to a rational map $g$ of degree $14$ with branch datum
\[
\bigl\{\pi_0^g=[2,2,2,2,2,2,2],\quad \pi_1^g=[4,4,6],\quad \pi_\infty^g=[3,3,3,3,1,1]\bigr\}.
\]

\begin{figure}[htbp]
\centering
\includegraphics[width=9cm]{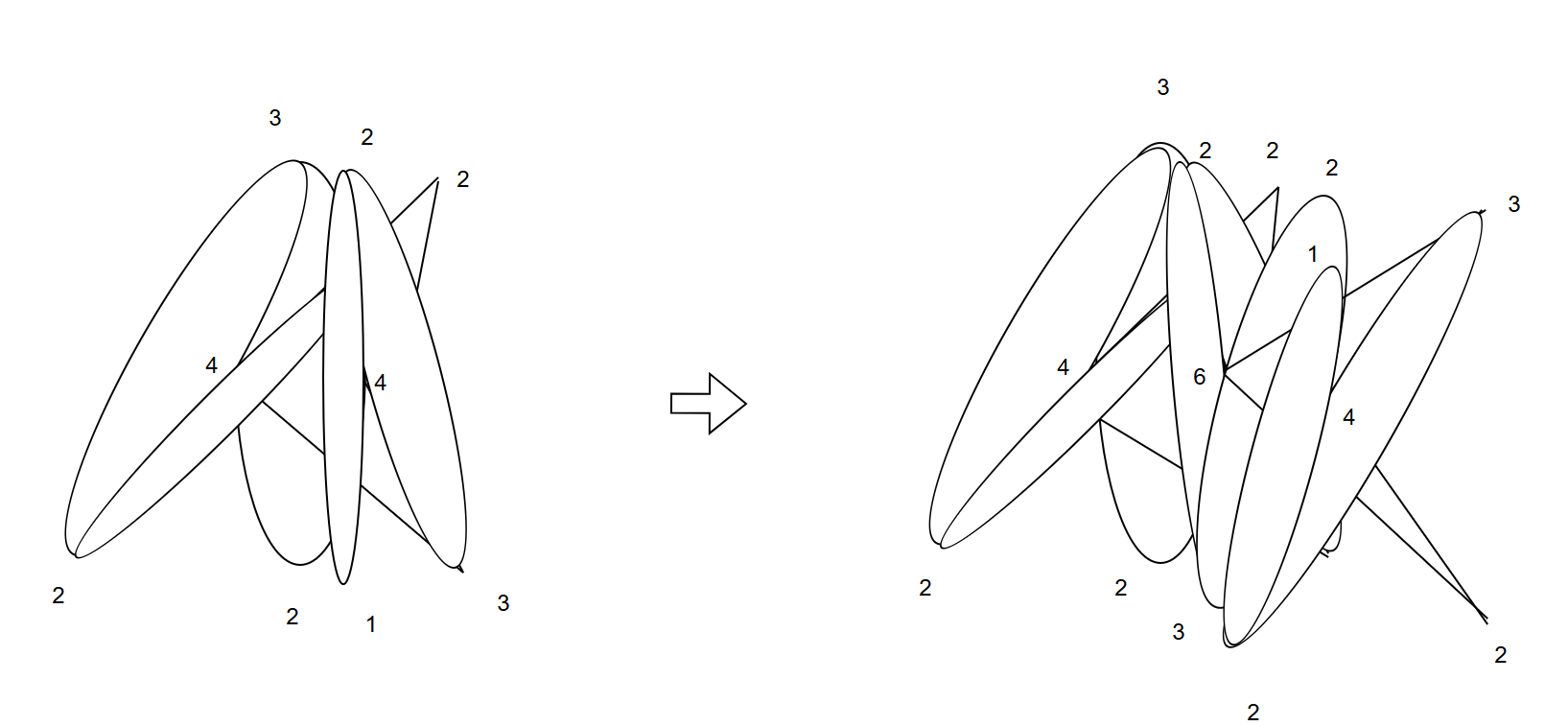}
\caption{The map $g$ in the Type-B case.}
\label{fig:TypeB1}
\end{figure}

From the figure, it is straightforward to see that there are four possible ways to glue four standard footballs onto $g$ so that the resulting map has branch datum containing the partitions $[\underbrace{3,\ldots,3}_{5},1,1,1]$ and $[\underbrace{2,\ldots,2}_{9}]$. The remainder of the proof of part (B) is analogous to that of part (A) and is therefore omitted.

\textbf{Sufficiency.}

The gluing constructions described above are reversible: starting from the given branch datum, one can successively decompose and then reconstruct the corresponding football decomposition, thereby obtaining the desired rational map. Hence sufficiency follows.
\end{proof}

\begin{rem}
Note that Theorems~\ref{MThm1} and \ref{MThm2} can also be proved via the method of dessins d'enfants; we leave the details to the interested reader.
\end{rem}


\end{document}